\theoremstyle{plain}
\newtheorem{lem}{Lemma}[section]
\newtheorem{thm}{Theorem}[section]            
\newtheorem{prop}{Proposition}[section]
\theoremstyle{definition}
\newtheorem{exa}{Example}[section]
\newtheorem{rmk}{Remark}[section]
\newtheorem*{thank}{Thanks}
\newcommand{\be}[1][*]{\begin{equation#1}}
\newcommand{\ee}[1][*]{\end{equation#1}}
\newcommand{\barr}[1]{\begin{array}{#1}}
\newcommand{\earr}{\end{array}}
\newcommand{\btab}[1]{\renewcommand{\arraystretch}{1.2}\begin{center}\begin{tabular}{#1}}
\newcommand{\etab}{\end{tabular}\end{center}\renewcommand{\arraystretch}{1.0}}
\newcommand{\bnum}{\begin{enumerate}}
\newcommand{\enum}{\end{enumerate}}
\newcommand{\bcen}{\begin{center}}
\newcommand{\ecen}{\end{center}}
\newcommand{\sst}{\scriptstyle}
\newcommand{\mca}[1]{\mathcal{#1}}
\newcommand{\mrm}[1]{\mathrm{#1}}
\newcommand{\mfr}[1]{\mathfrak{#1}}
\def\side#1{\ifvmode\leavevmode\fi\vadjust{\vbox to0pt{\vss
\hbox to 0pt{\hskip\hsize\hskip1em                     
\vbox{\hsize2cm\small\raggedright\pretolerance10000       
\noindent\textcolor{red}{#1}\hfill}\hss}\vbox to8pt{\vfil}\vss}}}
\def\hook{\mathbin{
                 \hskip-0.2pt                
                 \vrule height0.4pt width5.0pt depth0pt   
                 \kern-0.4pt
                 \vrule height6.0pt width0.4pt depth0pt
                 \hskip0.2pt  }}
\def\shook{\mathbin{
                 \hskip-0.2pt                  
                 \vrule height0.3pt width3.8pt depth0pt   
                 \kern-0.3pt
                 \vrule height4.7pt width0.3pt depth0pt
                 \hskip0.2pt }}                         
\newcommand{\x}{\times}
\newcommand{\op}{\oplus}
\newcommand{\ox}{\otimes}
\newcommand{\ra}{\rightarrow}
\newcommand{\Ra}{\Rightarrow}
\newcommand{\all}{\forall}
\newcommand{\R}{\ensuremath{\mathbb{R}}}
\newcommand{\C}{\ensuremath{\mathbb{C}}}
\newcommand{\Pro}{\ensuremath{\mathbb{P}}}
\newcommand{\Flag}{\ensuremath{\mathbb{F}}}
\newcommand{\W}{\ensuremath{\mathcal{W}}}
\newcommand{\T}{\ensuremath{\mathrm{T}}}
\newcommand{\G}{\ensuremath{\mathrm{G}}}
\newcommand{\vphi}{\ensuremath{\varphi}}
\newcommand{\diag}{\ensuremath{\mathrm{diag}}}
\newcommand{\Id}{\ensuremath{\mathrm{Id}}}
\newcommand{\Ric}{\ensuremath{\mathrm{Ric}}}
\newcommand{\Scal}{\ensuremath{\mathrm{Scal}}}
\newcommand{\hol}{\ensuremath{\mathfrak{hol}}}
\newcommand{\Hol}{\ensuremath{\mathrm{Hol}}}
\newcommand{\iso}{\ensuremath{\mathfrak{iso}}}
\newcommand{\un}{\ensuremath{\mathfrak{u}}}
\newcommand{\Un}{\ensuremath{\mathrm{U}}}
\newcommand{\su}{\ensuremath{\mathfrak{su}}}
\newcommand{\SU}{\ensuremath{\mathrm{SU}}}
\newcommand{\Spl}{\ensuremath{\mathrm{Sp}}}
\newcommand{\so}{\ensuremath{\mathfrak{so}}}
\newcommand{\SO}{\ensuremath{\mathrm{SO}}}
\newcommand{\spin}{\ensuremath{\mathfrak{spin}}}
\newcommand{\Spin}{\ensuremath{\mathrm{Spin}}}
\newcommand{\g}{\ensuremath{\mathfrak{g}}}
\newcommand{\h}{\ensuremath{\mathfrak{h}}}
\begin{document}
\thispagestyle{empty}
\date{\today}
\title{Spin(7)-manifolds with parallel torsion form}
\author{Christof Puhle}
\address{
Institut f\"ur Mathematik \newline\indent
Humboldt-Universit\"at zu Berlin\newline\indent
Unter den Linden 6\newline\indent
10099 Berlin, Germany}
\email{\noindent puhle@math.hu-berlin.de}
\urladdr{www.math.hu-berlin.de/~puhle}
\thanks{Supported by the SFB 647: `Space--Time--Matter', German Research Foundation DFG}
\subjclass[2000]{Primary 53C25; Secondary 81T30}
\keywords{$\Spin\left(7\right)$-manifolds, connections with torsion, parallel spinors}
\begin{abstract}
Any $\Spin\left(7\right)$-manifold admits a metric connection $\nabla^c$ with totally skew-symmetric
torsion $\T^c$ preserving the underlying structure. We classify those with $\nabla^c$-parallel $\T^c
\neq0$ and non-Abelian isotropy algebra $\iso\left(\T^c\right)\leqslant\spin\left(7\right)$. These are
isometric to either Riemannian products or homogeneous naturally reductive spa\-ces, each admitting
two $\nabla^c$-parallel spinor fields.
\end{abstract}
\maketitle
\setcounter{tocdepth}{1}
\bcen
\begin{minipage}{0.7\linewidth}
    \begin{small}
      \tableofcontents
    \end{small}
\end{minipage}
\ecen
\pagestyle{headings}
%
%
%
%
\section{Introduction}\noindent
In the early $`80$'s physicists tried to incorporate torsion into superstring and supergravity
theories in order to get a physically flexible model. Strominger described the mathematics of
the underlying superstring theory of type II. It consists of a Riemannian spin manifold $\left(M^n,
g\right)$ equipped, amongst other things, with a spinor field $\Psi$ and a $3$-form $\T$ that
satisfy a certain set of field equations (see \cite{Str86}), including
\be
\nabla^{g}_X\Psi+\frac{1}{4}\,\left(X\hook\T\right)\cdot\Psi=0\quad\all\,X\in TM^n.
\ee
We denote by $\tau\cdot\Psi$ the Clifford product of a differential form $\tau$ with a spinor
field $\Psi$. In the theory $\T$ is seen as a field strength of sorts, whilst $\Psi$ is the
so-called supersymmetry. With the metric connection $\nabla$ whose torsion is the $3$-form $\T$,
\be
g\left(\nabla_X Y,Z\right) = g\left(\nabla^g_X Y,Z\right) + \frac{1}{2}\cdot\T\left(X,Y,Z\right)\quad\all\,X,Y,Z\in TM^n,
\ee
the above equation transforms to $\nabla\Psi=0$. In other words, the spinor field $\Psi$ is parallel
with respect to $\nabla$, a fact imposing restrictions on the holonomy group $\Hol\left(\nabla\right)$.
In the case $\T=0$, i.e.\ when $\nabla$ is the Levi-Civita connection of $\left(M^n,g\right)$, the
holonomy group is one of the following (see \cite{McKW89}):
\be
\SU\left(n\right),\quad \Spl\left(n\right),\quad \G_2,\quad \Spin\left(7\right).
\ee
In order to construct models with $\T\neq0$, it is therefore reasonable to study manifolds that admit
a metric connection $\nabla$ with totally skew-symmetric torsion whose $\Hol\left(\nabla\right)$ is contained
in $\SU\left(n\right)$, $\Spl\left(n\right)$, $\G_2$ or $\Spin\left(7\right)$. Surprisingly, the existence
of such a connection is unobstructed for $\Spin\left(7\right)$-manifolds $M^8$ (see \cite{Iva04}). Furthermore
this connection, denoted by $\nabla^c$, is unique, preserves the underlying $\Spin\left(7\right)$-structure
and makes a non-trivial spinor field parallel. The more general point of view of \cites{FI02,AFNP05,Puh07}
indicates that structures with parallel torsion form $\T^c$,
\be
\nabla^c\T^c=0,
\ee
are of particular interest and provide a starting point to solve the entire system of Strominger's equations.
For example, $\delta\T^c=0$ is automatically satisfied in this setup. Moreover, many geometric properties
become algebraically tractable by assuming the parallelism of $\T^c$, for this implies, for instance, that
the holonomy algebra $\hol\left(\nabla^c\right)$ becomes a subalgebra of the isotropy algebra $\iso\left(\T^c
\right)\leqslant\spin\left(7\right)$.

The aim of the paper is the classification of $\Spin\left(7\right)$-manifolds with parallel torsion form $\T^c
\neq0$ and non-Abelian $\iso\left(\T^c\right)\leqslant\spin\left(7\right)$. We show that the latter fall into
eight types. For all of these algebrae we describe the admissible torsion forms $\T^c$ and Ricci tensors $\Ric^c$
with respect to $\nabla^c$. Finally we discuss the geometry of the space $M^8$ relatively to its holonomy algebra
$\hol\left(\nabla^c\right)\leqslant\iso\left(\T^c\right)$ and to the $\Spin\left(7\right)$-orbit of the torsion
form
\be
\T^c\in\Lambda^3=\Lambda^3_8\op\Lambda^3_{48}.
\ee
The main result is that these spaces are isometric to either a Riemannian product or a homogeneous naturally
reductive space; some of them are uniquely determined (see theorem \ref{thm:2} and theorem \ref{thm:3}).
Moreover, every structure admits at least two $\nabla^c$-parallel spinor fields. There are examples exhibiting
$16$ $\nabla^c$-parallel spinor fields and satisfying the additional constraint
\be
\Ric^c=\Ric^g_{ij}-\frac{1}{4}\,\T^c_{imn}\T^c_{jmn}=0
\ee
for the energy-momentum tensor (see examples \ref{exa:1} and \ref{exa:2}).

The paper is structured as follows: In \autoref{sec:2} we state basic facts on metric connections with parallel,
totally skew-symmetric torsion. We then specialize to the case of $\Spin\left(7\right)$-structures in \autoref{sec:3}.
Section \ref{sec:4} is devoted to the study of the non-Abelian subalgebrae of $\spin\left(7\right)$ used for the
algebraic classification (see \autoref{sec:5}) in terms of the torsion form. In the last section we discuss the
geometry of each of these classes.
%
%
%
%
\section{Parallel torsion}\label{sec:2}\noindent
Fix a Riemannian spin manifold $\left(M^n,g\right)$, a $3$-form $\T$, and denote the Levi-Civita connection
by $\nabla^g$. The equation
\be
g\left(\nabla_X Y,Z\right) = g\left(\nabla^g_X Y,Z\right) + \frac{1}{2}\cdot\T\left(X,Y,Z\right)\quad\all\,X,Y,Z\in TM^n,
\ee
defines a metric connection $\nabla$ with totally skew-symmetric torsion $\T$. We will consider the case of
parallel torsion, $\nabla \T=0$. Then the $3$-form $\T$ is coclosed (see \cite{FI02}), $\delta \T=0$, its
differential is given by
\be
d\T=\sum_i\left(e_i\hook \T\right)\wedge\left(e_i\hook \T\right)=:2\,\sigma^{\T}
\ee
for a chosen orthonormal frame $\left(e_1,\ldots,e_n\right)$, and the curvature tensor $\mrm{R}^\nabla$ of $\nabla$
is a field of symmetric endomorphism of $\Lambda^2$. If there exists a $\nabla$-parallel spinor
field $\Psi$ one can compute the Ricci tensor $\Ric^\nabla$ of $\nabla$ algebraically (see for example \cite{AFNP05}),
\be
2\,\Ric^\nabla\left(X\right)\cdot\Psi = \left(X\hook d\T\right)\cdot\Psi.
\ee
Moreover, the following relation holds (see \cite{AF04}):
\be
4\,\T^2\cdot\Psi := 4\,\T\cdot\left(\T\cdot\Psi\right) = \left(2\,\Scal^g+\|\T\|^2\right)\cdot\Psi.
\ee
Consequently, $\T^2$ acts as a scalar on the space of $\nabla$-parallel spinor fields, hence it
gives an algebraic restriction on $\T$.
%
%
%
%
\section{\texorpdfstring{$\Spin\left(7\right)$}{Spin(7)}-manifolds}\label{sec:3}\noindent
Consider the space $\R^8$, fix an orientation and denote a chosen oriented orthonormal
basis by $\left(e_1,\ldots,e_8\right)$. The compact simply connected Lie group $\Spin\left(7\right)$ can be
described (see for example \cite{HL82}) as the isotropy group of the $4$-form $\Phi$,
\be\label{eqn:1}
\Phi =  \phi+\ast\phi, \tag{$\star$}
\ee
where $\phi$, $Z$ and $D$ denote the following forms:
\be
\phi :=\left(Z \wedge e_7 + D \right)\wedge e_{8},\quad Z := e_{12}+e_{34}+e_{56},\quad D := e_{246}-e_{235}-e_{145}-e_{136}.
\ee
Here and henceforth we shall not distinguish between vectors and covectors and use the
notation $e_{i_1\ldots i_m}$ for the exterior product $e_{i_1}\wedge\ldots\wedge e_{i_m}$.
The so-called \emph{fundamental form} $\Phi$ is self-dual with respect to the Hodge star
operator, $\ast\Phi=\Phi$, and the $8$-form $\Phi\wedge\Phi$ is a non-zero multiple of the
volume form of $\R^8$.

A \emph{$\Spin\left(7\right)$-structure/manifold} is a triple $\left(M^8,g,\Phi\right)$ consisting of an
$8$-di\-men\-sio\-nal Riemannian manifold $\left(M^8,g\right)$ and a $4$-form $\Phi$ such that there exists
an oriented orthonormal \emph{adapted frame} $\left(e_1,\ldots,e_8\right)$ realizing (\ref{eqn:1}) at every
point. Equivalently, these structures can be defined as a reduction of the structure group of orthonormal
frames of the tangent bundle to $\Spin\left(7\right)$. The space of $3$-forms decomposes into two irreducible
$\Spin\left(7\right)$-modules,
\be
\Lambda^3=\Lambda^3_8\op\Lambda^3_{48},
\ee
which can be characterized using the fundamental form as
\be
\Lambda^3_8:=\left\{\ast\left(\beta\wedge\Phi\right)\,\,:\,\,\beta\in\Lambda^1\right\},\quad
\Lambda^3_{48}:=\left\{\gamma\in\Lambda^3\,\,:\,\,\gamma\wedge\Phi=0\right\}.
\ee
The subscript specifies the dimension of the respective space. We will denote the
projection of a $3$-form $\T$ onto one of these spaces by $\T_8$ or $\T_{48}$ respectively.

Any $\Spin\left(7\right)$-manifold admits (see \cite{Iva04}) a unique metric connection $\nabla^c$
(the \emph{characteristic connection}) with totally skew-symmetric torsion $\T^c$ (the
\emph{characteristic torsion}) preserving the $\Spin\left(7\right)$-structure, $\nabla^c\Phi=0$,
and $\T^c$ is given by
\be
\T^c=-\delta\Phi-\frac{7}{6}\,\ast\left(\theta\wedge\Phi\right).
\ee
Here $\theta\in\Lambda^1$ denotes the so-called \emph{Lee form}
\be
\theta := \frac{1}{7}\,\ast\left(\delta\Phi\wedge\Phi\right) = \frac{6}{7}\,\ast\left(\Phi\wedge \T^c\right) =
        -\frac{1}{7}\,\ast\left(\ast d\Phi\wedge\Phi\right).
\ee
The Riemannian scalar curvature $\Scal^g$ and the scalar curvature $\Scal^c$ of $\nabla^c$ are given by
(see \cite{Iva04})
\begin{align*}\label{eqn:4}
\Scal^g&=\frac{49}{18}\,\|\theta\|^2-\frac{1}{2}\,\|\T^c\|^2+\frac{7}{2}\,\delta\theta, &
\Scal^c&=\Scal^g-\frac{3}{2}\,\|\T^c\|^2. \tag{$\diamond$}
\end{align*}
Analyzing the algebraic type of $\T^c$ we obtain Cabrera's description \cite{Cab95} -- by differential
equations involving the Lee form -- of the Fern\'andez classification \cite{Fer86} of $\Spin\left(7
\right)$-structures. For example, a $\Spin\left(7\right)$-structure is of class $\W_1$, i.e.\ a
\emph{balanced} structure, if and only if the Lee form vanishes. Equivalently, these structures can
be characterized by $\T^c_8=0$. $\Spin\left(7\right)$-structures of class $\W_0$ -- the so-called
\emph{parallel} structures -- are defined by a closed fundamental form, $d\Phi=0$. These are the
structures with vanishing torsion, $\T^c=0$. In \cite{Cab95} Cabrera shows that the Lee form of a
$\Spin\left(7\right)$-structure of class $\W_2$ (for which $d\Phi=\theta\wedge\Phi$ holds or, equivalently,
$\T^c_{48}=0$) is closed, and therefore such a manifold is locally conformally equivalent to a parallel
$\Spin\left(7\right)$-manifold. These are called \emph{locally conformal parallel}. Finally, structures
of class $\W=\W_1+\W_2$ are characterized by $\T^c_8\neq0$ and $\T^c_{48}\neq0$. We summarize the previous
facts in the following table:
\btab{|c|c|c|c}
\cline{1-3}
class & characteristic torsion & differential equations&\\
\cline{1-3}
$\W_0$ (parallel) & $\T^c_8=0$, $\T^c_{48}=0$ & $d\Phi=0$, $\theta=0$&\\
\cline{1-3}
$\W_2$ (locally conformal parallel) & $\T^c_{48}=0$ & $d\Phi=\theta\wedge\Phi$&\\
\cline{1-3}
$\W_1$ (balanced) & $\T^c_8=0$ & $\theta=0$&\\
\cline{1-3}
$\W=\W_1+\W_2$ & $\T^c_8\neq0$, $\T^c_{48}\neq0$ & ---&$\!\!.$\\
\cline{1-3}
\etab

We now restrict to parallel characteristic torsion, $\nabla^c\T^c=0$.
\begin{lem}
The following formulae hold in presence of parallel characteristic torsion:
\begin{align*}
\|\theta\|^2&=\frac{36}{7}\,\|\T^c_8\|^2, & \delta\theta&=0.
\end{align*}
\end{lem}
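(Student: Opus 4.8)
The plan is to prove the two identities by quite different means: the norm relation $\|\theta\|^2=\frac{36}{7}\,\|\T^c_8\|^2$ is a pointwise, purely algebraic feature of the $\Spin(7)$-structure and makes no use of parallelism, whereas $\delta\theta=0$ is exactly where the hypothesis $\nabla^c\T^c=0$ is needed. I would therefore handle them in separate steps.

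For the norm relation I would first use $\gamma\wedge\Phi=0$ for $\gamma\in\Lambda^3_{48}$ to reduce $\theta=\frac{6}{7}\,\ast(\Phi\wedge\T^c)$ to $\theta=\frac{6}{7}\,\ast(\Phi\wedge\T^c_8)$. Next, the Hodge identity $\ast(\beta\wedge\Phi)=\beta\hook\ast\Phi$ combined with self-duality $\ast\Phi=\Phi$ shows that each element of $\Lambda^3_8$ may be written as $\T^c_8=\beta\hook\Phi$ for a unique $\beta\in\Lambda^1$. The key computation is then $\Phi\wedge(\beta\hook\Phi)=\frac{1}{2}\,\beta\hook(\Phi\wedge\Phi)$, which uses that $\beta\hook\Phi$ and $\Phi$ commute under $\wedge$; since self-duality gives $\Phi\wedge\Phi=\|\Phi\|^2\,\vol=14\,\vol$, this equals $7\,(\beta\hook\vol)=7\,\ast\beta$. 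Applying $\ast$ and $\ast\ast=-1$ on $\Lambda^1$ yields $\theta=-6\beta$, so $\|\theta\|^2=36\,\|\beta\|^2$. Finally $\|\T^c_8\|^2=\|\beta\hook\Phi\|^2=7\,\|\beta\|^2$, which follows from the $\Spin(7)$-invariance of $\beta\mapsto\|\beta\hook\Phi\|^2$ together with the trace identity $\sum_i\|e_i\hook\Phi\|^2=4\,\|\Phi\|^2=56$; combining gives $\|\theta\|^2=36\,\|\beta\|^2=\frac{36}{7}\,\|\T^c_8\|^2$.

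For $\delta\theta=0$ the observation is that $\theta=\frac{6}{7}\,\ast(\Phi\wedge\T^c)$ is assembled from the $\nabla^c$-parallel tensors $\Phi$ and $\T^c$ by metric operations (wedge and Hodge star), all of which commute with the metric connection $\nabla^c$; hence $\nabla^c\theta=0$. Comparing the two connections on a $1$-form $\alpha$ gives $(\nabla^c_X\alpha)(Y)=(\nabla^g_X\alpha)(Y)-\frac{1}{2}\,\T^c(X,Y,\alpha^\sharp)$, and the correction term drops out of the divergence because $\T^c(e_i,e_i,\cdot)=0$; thus $\delta\alpha=-\sum_i(\nabla^g_{e_i}\alpha)(e_i)=-\sum_i(\nabla^c_{e_i}\alpha)(e_i)$ for every $\alpha$. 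Taking $\alpha=\theta$ and invoking $\nabla^c\theta=0$ gives $\delta\theta=0$ at once.

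I expect the main obstacle to be the bookkeeping in the norm computation rather than any conceptual difficulty: pinning down the constants $\Phi\wedge\Phi=14\,\vol$ and $\|\beta\hook\Phi\|^2=7\,\|\beta\|^2$ correctly, and tracking the Hodge-star signs in dimension $8$ (where $\ast\ast=-1$ on $\Lambda^1$), is where errors would creep in. By contrast, the $\delta\theta=0$ half is essentially immediate once one notices that $\theta$ is itself $\nabla^c$-parallel and that the torsion term cannot contribute to a codifferential by antisymmetry.
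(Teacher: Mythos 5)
Your proof is correct, and on the half that the paper actually proves it takes a genuinely different route. The paper only argues the second identity: it writes $\delta\theta=-\ast\, d\ast\theta=\frac{6}{7}\ast d\left(\Phi\wedge\T^c\right)$ and kills $d$ of the $\nabla^c$-parallel $7$-form $\Phi\wedge\T^c$ via the general formula $d\omega=\sum_i\left(e_i\hook\T\right)\wedge\left(e_i\hook\omega\right)$ for $\nabla^c$-parallel forms, which requires checking the $\Spin\left(7\right)$-specific algebraic identity $\sum_i\left(e_i\hook\T\right)\wedge\left(e_i\hook\left(\Phi\wedge\T\right)\right)=0$ for arbitrary $\T\in\Lambda^3\left(\R^8\right)$. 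You instead work with the $1$-form $\theta$ itself: since $\theta=\frac{6}{7}\ast\left(\Phi\wedge\T^c\right)$ is assembled from $\nabla^c$-parallel tensors by operations commuting with $\nabla^c$, you get $\nabla^c\theta=0$, and the torsion correction in the codifferential of a $1$-form vanishes trivially by the trace $\T^c\left(e_i,e_i,\cdot\right)=0$ — no representation-theoretic computation needed, and the argument works verbatim for any metric connection with totally skew torsion. Both proofs hinge on the same parallelism input, but yours replaces the paper's degree-$7$ algebraic identity by an elementary degree-$1$ observation. For the norm identity the paper gives no proof at all, so your invariant computation ($\T^c_8=\beta\hook\Phi$, $\Phi\wedge\left(\beta\hook\Phi\right)=\frac{1}{2}\,\beta\hook\left(\Phi\wedge\Phi\right)=7\ast\beta$, hence $\theta=-6\beta$, together with $\|\beta\hook\Phi\|^2=7\,\|\beta\|^2$ from $\sum_i\|e_i\hook\Phi\|^2=4\,\|\Phi\|^2=56$) supplies what the paper leaves implicit; I checked the constants $\|\Phi\|^2=14$, the sign $\ast\ast=-1$ on $\Lambda^1\left(\R^8\right)$, and the reduction $\Phi\wedge\T^c=\Phi\wedge\T^c_8$ via $\Lambda^3_{48}\wedge\Phi=0$, and all are right. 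Your side remark that the norm relation is purely pointwise and needs no parallelism is also accurate — in the paper only $\delta\theta=0$ uses $\nabla^c\T^c=0$.
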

\begin{proof}
We prove the second equation,
\be
\delta\theta = -\ast d\ast\theta=-\frac{6}{7}\,\ast d\ast\ast\left(\Phi\wedge \T^c\right)=
\frac{6}{7}\,\ast d\left(\Phi\wedge \T^c\right).
\ee
The $7$-form $\Phi\wedge\T^c$ is $\nabla^c$-parallel and the sum $\sum_i\left(e_i\hook\T\right)
\wedge\left(e_i\hook\left(\Phi\wedge\T\right)\right)$ vanishes for arbitrary $3$-forms $\T\in\Lambda^3(\R^8)$.
\end{proof}
This Lemma and (\ref{eqn:4}) result in the following proposition:
\begin{prop}
Let $\left(M^8,g,\Phi\right)$ be a $\Spin\left(7\right)$-manifold with $\nabla^c\T^c=0$. Then the Riemannian
scalar curvature $\Scal^g$ and the scalar curvature $\Scal^c$ of $\nabla^c$ are given in terms of the torsion
form by
\begin{align*}
\Scal^g&=\frac{27}{2}\,\|\T^c_8\|^2-\frac{1}{2}\,\|\T^c_{48}\|^2, & 
\Scal^c&=12\,\|\T^c_8\|^2-2\,\|\T^c_{48}\|^2.
\end{align*}
\end{prop}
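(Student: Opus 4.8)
The plan is to derive both identities by direct substitution into the formulae $(\diamond)$ for $\Scal^g$ and $\Scal^c$, feeding in the two equations of the preceding Lemma. The only structural fact I would need beyond the Lemma is that the splitting $\Lambda^3=\Lambda^3_8\op\Lambda^3_{48}$ into the two inequivalent irreducible $\Spin\left(7\right)$-modules is orthogonal with respect to the invariant inner product (by Schur's lemma), so that the Pythagorean relation $\|\T^c\|^2=\|\T^c_8\|^2+\|\T^c_{48}\|^2$ holds. This lets me trade the single norm $\|\T^c\|^2$ appearing in $(\diamond)$ for the two component norms occurring in the claim.

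First I would insert $\delta\theta=0$ and $\|\theta\|^2=\frac{36}{7}\,\|\T^c_8\|^2$ into the first equation of $(\diamond)$. The term $\frac{7}{2}\,\delta\theta$ drops out, and the coefficient produced by $\frac{49}{18}\,\|\theta\|^2$ simplifies through $\frac{49}{18}\cdot\frac{36}{7}=14$. Combining this with $-\frac{1}{2}\,\|\T^c\|^2$ expanded via the Pythagorean relation yields $\Scal^g=\left(14-\frac{1}{2}\right)\|\T^c_8\|^2-\frac{1}{2}\,\|\T^c_{48}\|^2=\frac{27}{2}\,\|\T^c_8\|^2-\frac{1}{2}\,\|\T^c_{48}\|^2$, which is the first assertion.

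Finally I would substitute this expression for $\Scal^g$ into the second identity of $(\diamond)$, namely $\Scal^c=\Scal^g-\frac{3}{2}\,\|\T^c\|^2$, again expanding $\|\T^c\|^2$ orthogonally. Collecting coefficients gives $\frac{27}{2}-\frac{3}{2}=12$ for $\|\T^c_8\|^2$ and $-\frac{1}{2}-\frac{3}{2}=-2$ for $\|\T^c_{48}\|^2$, which is the second formula. I do not expect any genuine obstacle: all the analytic content, in particular the vanishing of $\delta\theta$ and the proportionality of $\|\theta\|^2$ to $\|\T^c_8\|^2$ under the hypothesis $\nabla^c\T^c=0$, has already been discharged in the Lemma, so the Proposition reduces to a bookkeeping of numerical coefficients.
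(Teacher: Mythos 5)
Your proposal is correct and is precisely the paper's own argument: the Proposition is stated as an immediate consequence of the Lemma together with the formulae $(\diamond)$, via the substitution $\delta\theta=0$, $\|\theta\|^2=\frac{36}{7}\,\|\T^c_8\|^2$ and the orthogonality $\|\T^c\|^2=\|\T^c_8\|^2+\|\T^c_{48}\|^2$, exactly as you carry out. Your coefficient bookkeeping, including $\frac{49}{18}\cdot\frac{36}{7}=14$, checks out.
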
\noindent
A direct computation shows that for arbitrary $3$-forms $\T$, vector fields $X$ and spinor fields $\Psi$ the following
equation is satisfied:
\be
-4\,\left(X\hook\sigma^{\T}\right)\cdot\Psi=\left(\T^2-7\,\|\T_8\|^2\right)\cdot X\cdot\Psi.
\ee
The previous proposition together with this equation and the facts of \autoref{sec:2} eventually prove the following:
\begin{prop}\label{prop:1}
Let $\left(M^8,g,\Phi\right)$ be a $\Spin\left(7\right)$-manifold with $\nabla^c\T^c=0$. Any $\nabla^c$-parallel
spinor field $\Psi$ on $M^8$ satisfies
\begin{align*}
\left(\T^c\right)^2\cdot\Psi &=7\,\|\T^c_8\|^2\cdot\Psi,&
-4\,\Ric^c\left(X\right)\cdot\Psi &=\left(\left(\T^c\right)^2-7\,\|\T^c_8\|^2\right)\cdot X\cdot\Psi.
\end{align*}
\end{prop}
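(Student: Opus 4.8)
The plan is to assemble both identities purely algebraically from the facts of \autoref{sec:2}, the preceding proposition, and the displayed identity $-4\,(X\hook\sigma^{\T^c})\cdot\Psi=((\T^c)^2-7\,\|\T^c_8\|^2)\cdot X\cdot\Psi$ stated just above. No new geometric input is required: the substance already lies in those ingredients, and what remains is bookkeeping.

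For the first identity I would start from the universal relation $4\,(\T^c)^2\cdot\Psi=(2\,\Scal^g+\|\T^c\|^2)\cdot\Psi$ of \autoref{sec:2}, valid for any $\nabla^c$-parallel spinor $\Psi$. I would then substitute the value of $\Scal^g$ supplied by the preceding proposition and use the orthogonality of the $\Spin(7)$-decomposition $\Lambda^3=\Lambda^3_8\op\Lambda^3_{48}$, which gives $\|\T^c\|^2=\|\T^c_8\|^2+\|\T^c_{48}\|^2$. A short computation
\[
2\,\Scal^g+\|\T^c\|^2=\left(27\,\|\T^c_8\|^2-\|\T^c_{48}\|^2\right)+\left(\|\T^c_8\|^2+\|\T^c_{48}\|^2\right)=28\,\|\T^c_8\|^2
\]
then shows that the $\Lambda^3_{48}$-contributions cancel, yielding $(\T^c)^2\cdot\Psi=7\,\|\T^c_8\|^2\cdot\Psi$.

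For the second identity I would recall from \autoref{sec:2} that $2\,\Ric^c(X)\cdot\Psi=(X\hook d\T^c)\cdot\Psi$ together with $d\T^c=2\,\sigma^{\T^c}$, so that $\Ric^c(X)\cdot\Psi=(X\hook\sigma^{\T^c})\cdot\Psi$. Multiplying by $-4$ and inserting the displayed spinorial identity above (for $\T=\T^c$) immediately gives
\[
-4\,\Ric^c(X)\cdot\Psi=-4\,(X\hook\sigma^{\T^c})\cdot\Psi=\left((\T^c)^2-7\,\|\T^c_8\|^2\right)\cdot X\cdot\Psi,
\]
which is the assertion. The only points demanding attention are the orthogonality of the decomposition, responsible for the cancellation of the $\Lambda^3_{48}$-terms in the first step, and the factor relating $d\T^c$ to $\sigma^{\T^c}$ in the second; beyond this there is no genuine obstacle, since all the real content is already packaged in the preceding proposition and the pre-stated identity.
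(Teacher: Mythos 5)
Your proof is correct and follows exactly the route the paper intends: it combines the scalar-curvature formula of the preceding proposition with the Casimir-type relation $4\,(\T^c)^2\cdot\Psi=(2\,\Scal^g+\|\T^c\|^2)\cdot\Psi$ for the first identity, and the Ricci formula $2\,\Ric^c(X)\cdot\Psi=(X\hook d\T^c)\cdot\Psi$ with $d\T^c=2\,\sigma^{\T^c}$ plus the displayed $\sigma^{\T}$-identity for the second. The paper merely asserts that these ingredients "eventually prove" the proposition, so your write-up simply makes the same bookkeeping explicit, including the correct cancellation $2\,\Scal^g+\|\T^c\|^2=28\,\|\T^c_8\|^2$.
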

From now on we assume $\Spin\left(7\right)$-structures to be non-parallel, $\T^c\neq0$, and to have parallel
characteristic torsion, $\nabla^c\T^c=0$.
%
%
%
%
%
\section{Subalgebrae of \texorpdfstring{$\spin\left(7\right)$}{spin(7)}}\label{sec:4}\noindent
It is known that the group $\Spin\left(7\right)\subset\SO\left(8\right)$ acts on spinors. Let $\mrm{Cliff}\left(
\R^8\right)$ denote the real Clifford algebra of the Euclidean space $\R^8$. We will use the following real
representation of this algebra on the space of real spinors $\Delta_8:=\R^{16}$:
\be
e_i=\left[\barr{cc} 0 & M_i \\
                  M_i & 0 \earr \right] \,\,\mrm{for}\,\, i=1,\ldots,7 \,\,,\quad
e_8=\left[\barr{cc} 0 & \Id \\
                 -\Id & 0 \earr \right],
\ee
\begin{align*}
M_1&:=E_{18}+E_{27}-E_{36}-E_{45}, & M_2&:=-E_{17}+E_{28}+E_{35}-E_{46},\\
M_3&:=-E_{16}+E_{25}-E_{38}+E_{47},& M_4&:=-E_{15}-E_{26}-E_{37}-E_{48},\\
M_5&:=-E_{13}-E_{24}+E_{57}+E_{68},& M_6&:=E_{14}-E_{23}-E_{58}+E_{67},\\
M_7&:=E_{12}-E_{34}-E_{56}+E_{78}.
\end{align*}
Here $E_{i\!j}$ denotes the standard basis of the Lie algebra $\so\left(8\right)$. We fix an
orthonormal basis $\Psi_1:=\left[1,0,\ldots,0\right]^T$ , $\ldots$ , $\Psi_{16}:=\left[0,\ldots,0,1\right]^T$
of real spinors. The $4$-form $\Phi$ corresponds via the Clifford product to the real
spinor $\Psi_0:=\Psi_9-\Psi_{10}\in\Delta_8$,
\be
\Phi\cdot\Psi_0=-14\cdot\Psi_0,
\ee
and therefore $\Spin\left(7\right)$ can be seen as the isotropy group of $\Psi_0$. Its Lie algebra
$\spin\left(7\right)$ is the subalgebra of $\spin\left(8\right)$ containing all $2$-forms
\be
\omega=\sum_{i<j}\omega_{i\!j}\cdot e_{i\!j}\in\Lambda^2\left(\R^8\right)
\ee
such that the Clifford product $\omega\cdot\Psi_0=0$. This is satisfied if and only if
\begin{align*}
\omega_{18} &= -\omega_{27} + \omega_{36} + \omega_{45}, &
\omega_{28} &=  \omega_{17} + \omega_{35} - \omega_{46}, & 
\omega_{38} &= -\omega_{16} - \omega_{25} - \omega_{47}, \\
\omega_{48} &= -\omega_{15} + \omega_{26} + \omega_{37}, & 
\omega_{58} &=  \omega_{14} + \omega_{23} - \omega_{67}, &
\omega_{68} &=  \omega_{13} - \omega_{24} + \omega_{57}, \\ 
\omega_{78} &= -\omega_{12} - \omega_{34} - \omega_{56}.
\end{align*}
We fix the following basis of $\spin\left(7\right)$:
\begin{align*}
P_1&:=e_{35}+e_{46}, & P_2&:=e_{36}-e_{45}, & P_3&:=e_{15}+e_{26}, & P_4&:=e_{16}-e_{25},\\
P_5&:=e_{13}+e_{24}, & P_6&:=e_{14}-e_{23}, & P_7&:=e_{12}-e_{34}, & P_8&:=e_{34}-e_{56},
\end{align*}
\begin{align*}
Q_1&:=2\cdot e_{17}-e_{35}+e_{46}, & Q_2&:=2\cdot e_{27}+e_{36}+e_{45}, &
Q_3&:=2\cdot e_{37}+e_{15}-e_{26},\\
Q_4&:=2\cdot e_{47}-e_{16}-e_{25}, & Q_5&:=2\cdot e_{57}-e_{13}+e_{24}, &
Q_6&:=2\cdot e_{67}+e_{14}+e_{23},
\end{align*}
\begin{align*}
S_1&:=e_{18}-e_{27}, & S_2&:=e_{28}+e_{17}, & S_3&:=e_{38}-e_{47}, & S_4&:=e_{48}+e_{37},\\
S_5&:=e_{58}-e_{67}, & S_6&:=e_{68}+e_{57}, & S_7&:=e_{78}-e_{56}.
\end{align*}
For a given Lie subalgebra $\g$ of $\spin\left(7\right)$, i.e.\ $\g\leqslant\spin\left(7\right)$, we denote
by $\left(\Lambda^3\left(\R^8\right)\right)_\g$ and $\left(\Delta_8\right)_\g$ the spaces of $\g$-invariant
$3$-forms and spinors respectively. We assume $\left(\Lambda^3\left(\R^8\right)\right)_\g$ is non-trivial.
The action of $\spin\left(7\right)$ coincides on the $8$-dimensional vector spaces
\begin{align*}
\R^8&=\mrm{span}\left(e_1,\ldots,e_8\right), &
\Lambda^3_8\left(\R^8\right)&=\mrm{span}\left(\ast\left(e_1\wedge\Phi\right),\ldots,\ast\left(e_8\wedge\Phi\right)
\right).
\end{align*}
Consequently $\g$ preserves a $\T\in\left(\Lambda^3\left(\R^8\right)\right)_\g$ with $\T_8\neq0$, if and only
if it preserves a vector. A long but elementary computation for the other case $\left(\Lambda^3_8\left(\R^8\right)
\right)_\g
=\left\{0\right\}$ proves that any non-Abelian $\g$ is conjugate to
\be
\R\op\su\left(2\right)=\mrm{span}\left(P_7+2\, P_8-4\, S_7,P_5,P_6,P_7\right)<\un\left(3\right)<\su\left(4\right)<
\spin\left(7\right).
\ee
To conclude, a non-Abelian subalgebra of $\spin\left(7\right)$ that preserves a non-trivial $3$-form is either
a subalgebra of $\g_2$ or the algebra $\R\op\su\left(2\right)$ above. Dynkin's results \cites{Dyn57a,Dyn57b} on
maximal subalgebrae of exceptional Lie algebrae like $\g_2$ allow to state the following:
\begin{thm}\label{thm:1}
Let $\g$ be a non-Abelian subalgebra of $\spin\left(7\right)$. If there exists a non-trivial $\g$-invariant $3$-form
$\T$, i.e.\ $0\neq \T\in\left(\Lambda^3\left(\R^8\right)\right)_\g$, then $\g$ is conjugate to one of the following
algebrae:
\begin{align*}
&\g_2 & &= \!\!& &\mrm{span}\left(P_1,\ldots,P_8,Q_1,\ldots,Q_6\right) & &< \!\!& &\spin\left(7\right),\\
&\su\left(3\right) & &= \!\!& &\mrm{span}\left(P_1,\ldots,P_8\right) & &< \!\!& &\g_2,\\
&\su\left(2\right)\op\su_c\left(2\right)\!\!\! & &= \!\!& &\mrm{span}\left(P_5,P_6,P_7,P_7+2\, P_8,Q_5,Q_6\right) & &< \!\!& &\g_2,\\
&\un\left(2\right) & &= \!\!& &\mrm{span}\left(P_7+2\, P_8,P_5,P_6,P_7\right) & &< \!\!& &\su\left(3\right),\\
&\R\op\su_c\left(2\right) & &= \!\!& &\mrm{span}\left(P_7,P_7+2\, P_8,Q_5,Q_6\right) & &< \!\!& &\su\left(2\right)\op\su_c\left(2\right),\\
&\so\left(3\right) & &= \!\!& &\mrm{span}\left(P_1+P_5,P_2+P_6,P_7+P_8\right) & &< \!\!& &\su\left(3\right),\\
&\su\left(2\right) & &= \!\!& &\mrm{span}\left(P_5,P_6,P_7\right) & &< \!\!& &\un\left(2\right),\\
&\su_c\left(2\right) & &= \!\!& &\mrm{span}\left(P_7+2\, P_8,Q_5,Q_6\right) & &< \!\!& &\R\op\su_c\left(2\right),\\
&\so_{ir}\left(3\right) & &= \!\!& &\mrm{span}\left(P_5-{\sst\sqrt{3/5}}\, Q_2,P_6+
{\sst\sqrt{3/5}}\, Q_1,P_7+3\, P_8\right)\!\!\! & &< \!\!& &\g_2,\\
&\R\op\su\left(2\right) & &= \!\!& &\mrm{span}\left(P_7+2\, P_8-4\, S_7,P_5,P_6,P_7\right) &
&< \!\!& &\su\left(4\right)<\spin\left(7\right).
\end{align*}
\end{thm}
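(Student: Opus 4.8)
The plan is to build on the dichotomy already prepared in the paragraph preceding the statement. Given a non-Abelian $\g\leqslant\spin\left(7\right)$ with $0\neq\T\in\left(\Lambda^3\left(\R^8\right)\right)_\g$, everything turns on the $8$-dimensional part $\left(\Lambda^3_8\left(\R^8\right)\right)_\g$. If it is non-trivial, then since $\spin\left(7\right)$ acts identically on $\R^8$ and on $\Lambda^3_8\left(\R^8\right)$, the algebra $\g$ fixes a non-zero vector; the stabiliser in $\Spin\left(7\right)$ of a non-zero vector is the isotropy group of the induced $\G_2$-structure on the orthogonal complement, so $\g\leqslant\g_2$ after conjugation. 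If instead $\left(\Lambda^3_8\left(\R^8\right)\right)_\g=\left\{0\right\}$, the elementary computation quoted in the text forces $\g$ to be conjugate to the distinguished $\R\op\su\left(2\right)=\mrm{span}\left(P_7+2\,P_8-4\,S_7,P_5,P_6,P_7\right)$, which lies in $\su\left(4\right)$ but not in $\g_2$ since no $S_i$ occurs among the generators of $\g_2$. This is the last entry of the list, so it remains only to treat the case $\g\leqslant\g_2$.

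Inside $\g_2$ the hypothesis is essentially automatic: $\g_2$ fixes the vector $e_8$ and preserves the associated $\G_2$ three-form on $\mrm{span}\left(e_1,\ldots,e_7\right)$, so every subalgebra preserves that form, and the problem reduces to enumerating all non-Abelian subalgebrae of $\g_2$ up to $\Spin\left(7\right)$-conjugacy. Here I would invoke Dynkin's classification. The maximal proper subalgebrae of $\g_2$ are, up to conjugacy, the two regular ones $\su\left(3\right)$ of type $A_2$ and $\su\left(2\right)\op\su\left(2\right)$ of type $A_1+A_1$, together with the principal, irreducibly embedded $\so_{ir}\left(3\right)$. I would then descend through the subalgebra lattice: the relevant maximal subalgebrae of $\su\left(3\right)$ are $\un\left(2\right)$ and the irreducible real form $\so\left(3\right)$, and inside $\un\left(2\right)$ sits $\su\left(2\right)=\mrm{span}\left(P_5,P_6,P_7\right)$; in $\su\left(2\right)\op\su_c\left(2\right)$ the two simple ideals $\su\left(2\right)$, $\su_c\left(2\right)$ and the subalgebra $\R\op\su_c\left(2\right)$ appear. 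Iterating until the surviving candidates are three-dimensional simple or lower, and discarding every Abelian subalgebra such as tori and lines, should leave exactly the nine entries $\g_2$, $\su\left(3\right)$, $\su\left(2\right)\op\su_c\left(2\right)$, $\un\left(2\right)$, $\R\op\su_c\left(2\right)$, $\so\left(3\right)$, $\su\left(2\right)$, $\su_c\left(2\right)$, $\so_{ir}\left(3\right)$.

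Two points require genuine care, and I expect the conjugacy bookkeeping to be the main obstacle. First, the two three-dimensional ideals of $\su\left(2\right)\op\su\left(2\right)$ must be shown to be inequivalent as embeddings in $\g_2$ — they restrict to the $7$-dimensional representation with different isotypical decompositions — so that $\su\left(2\right)$ and $\su_c\left(2\right)$ are genuinely distinct conjugacy classes rather than a single one, which is what the subscript $c$ records. Second, Dynkin's theorem governs conjugacy under inner automorphisms of $\g_2$, whereas the statement demands conjugacy in $\Spin\left(7\right)$, so one must check that passing to the larger group neither merges any two of the listed classes nor splits one of them. The cleanest way to settle both issues, and simultaneously to confirm that $\left(\Lambda^3\left(\R^8\right)\right)_\g$ is non-trivial in the borderline low-dimensional cases, is to write each candidate explicitly in the fixed basis $P_i$, $Q_j$, $S_k$, decompose $\Lambda^3\left(\R^8\right)$ as a $\g$-module, and read off both the dimension of the invariant space and a normal form for its generators; matching these normal forms against the displayed spans then completes the identification.
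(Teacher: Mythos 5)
Your proposal takes essentially the same route as the paper: the equivariant identification of $\Lambda^3_8\left(\R^8\right)$ with $\R^8$ reduces the case $\T_8\neq0$ to stabilizers of a vector, hence to subalgebrae of $\g_2$ (where invariance of a $3$-form is automatic), the case $\left(\Lambda^3_8\left(\R^8\right)\right)_\g=\left\{0\right\}$ is delegated to the same direct computation yielding $\R\op\su\left(2\right)<\su\left(4\right)$, and the subalgebrae of $\g_2$ are enumerated via Dynkin, exactly as in the text. Your one slip is immaterial: enlarging the group from $\G_2$ to $\Spin\left(7\right)$ can merge conjugacy classes but never split them, and since the theorem only claims $\g$ is conjugate to \emph{some} entry of the (not necessarily irredundant) list, only the non-merging check you propose via normal forms of $\left(\Lambda^3\left(\R^8\right)\right)_\g$ is actually needed.
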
\noindent
Here $\su_c\left(2\right)$ denotes the centralizer of $\su\left(2\right)$ inside $\g_2$ which is isomorphic, but not
conjugate, to $\su\left(2\right)$. $\so_{ir}\left(3\right)$ denotes the maximal subalgebra of $\g_2$ generating an
irreducible $7$-dimensional real representation.

The Lie algebrae $\g_2$ and $\R\op\su\left(2\right)$ are of rank $2$. Their maximal tori are given by
\begin{align*}
\mfr{t}^2 &:= k\cdot P_7 + l\cdot \left(P_7+2\, P_8\right)<\g_2, &
\tilde{\mfr{t}}^2 &:= \tilde{k}\cdot P_7 + \tilde{l}\cdot \left(P_7+2\,P_8-4\,S_7\right)<\R\op\su\left(2\right).
\end{align*}
$1$-dimensional tori contained in these will be denoted by $\mfr{t}^1$ or $\tilde{\mfr{t}}^1$
respectively.
%
%
%
%
\section{Algebraic classification}\label{sec:5}\noindent
Given a non-parallel $\Spin\left(7\right)$-structure let $\iso\left(\T^c\right)$ be the isotropy algebra of the
characteristic torsion $\T^c$ and $\hol\left(\nabla^c\right)$ the holonomy algebra of the characteristic
connection $\nabla^c$. Obviously, these two are Lie subalgebrae of $\spin\left(7\right)$, and a non-Abelian
$\iso\left(\T^c\right)$ is one of the algebrae in theorem \ref{thm:1}. But not all of those algebrae can
occur as the isotropy algebra of a non-trivial $3$-form. A direct computation proves the following:
\begin{prop}
If the isotropy algebra $\iso\left(\T\right)<\spin\left(7\right)$ of a non-trivial $3$-form $\T$ contains
$\su_c\left(2\right)$ or $\su\left(2\right)$, then $\dim\left(\iso\left(\T\right)\right)\geq 4$.
\end{prop}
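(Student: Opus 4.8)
The plan is to reduce the claim to a statement about one algebra and then to a finite computation. Since $\iso\left(\T\right)$ already contains the three-dimensional $\su\left(2\right)=\mrm{span}\left(P_5,P_6,P_7\right)$ (respectively $\su_c\left(2\right)=\mrm{span}\left(P_7+2\,P_8,Q_5,Q_6\right)$), one has $\dim\iso\left(\T\right)\geq3$, and the only way to have equality is $\iso\left(\T\right)=\su\left(2\right)$, because a three-dimensional subalgebra containing a three-dimensional one must coincide with it. Hence it suffices to prove that this particular $\su\left(2\right)$ — and likewise $\su_c\left(2\right)$ — is never the \emph{full} isotropy algebra of a non-trivial $3$-form; equivalently, that every non-zero $\su\left(2\right)$-invariant $3$-form $\T$ is annihilated by some $\omega\in\spin\left(7\right)$ lying outside $\su\left(2\right)$.

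First I would make the invariant $3$-forms explicit. As an $\su\left(2\right)$-module $\R^8=V\op W$ splits into the four-dimensional irreducible $V=\mrm{span}\left(e_1,\ldots,e_4\right)$, on which $P_5,P_6,P_7$ act as the anti-self-dual rotations, and the fixed space $W=\mrm{span}\left(e_5,\ldots,e_8\right)$. A short computation then gives $\left(\Lambda^3\R^8\right)^{\su\left(2\right)}=\Lambda^2_+V\ox W\op\Lambda^3W$, a $16$-dimensional space with basis $\left\{\eta\wedge e_j\right\}\cup\left\{e_{klm}\right\}$, where $\eta$ runs through the self-dual $2$-forms of $V$ and $j,k,l,m\in\left\{5,\ldots,8\right\}$. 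Writing the isotropy condition as the linear system $\omega\cdot\T=0$, $\omega\in\spin\left(7\right)$, I would exploit that the map $\sigma_\T\colon\spin\left(7\right)\to\Lambda^3$, $\omega\mapsto\omega\cdot\T$, is $\su\left(2\right)$-equivariant and so respects the decomposition of $\spin\left(7\right)$ into its $\su\left(2\right)$-isotypic blocks: the adjoint copy $\su\left(2\right)$ itself (always in the kernel), the trivial isotypic part, i.e.\ the six-dimensional centralizer $\mfr{z}=\mrm{span}\left(P_7+2\,P_8,Q_5,Q_6,S_5,S_6,S_7\right)$ of invariant $2$-forms, and a twelve-dimensional block $\mfr{p}$ on which $\su\left(2\right)$ acts as three copies of $V$. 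Proving $\dim\iso\left(\T\right)\geq4$ then amounts to exhibiting, for every $\T\neq0$, one further kernel vector in $\mfr{z}\op\mfr{p}$.

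The heart is the \emph{direct computation}, which I would organise by the algebraic type of $\T$, i.e.\ by the ranks and supports of its two components in $\Lambda^2_+V\ox W$ and $\Lambda^3W$. On the degenerate strata the extra symmetry can be found already inside the centralizer: for instance for $\T=\left(e_{12}+e_{34}\right)\wedge e_5$ the invariant-$2$-form parts of $P_7+2\,P_8$ and $S_7$ act proportionally, so that $\left(P_7+2\,P_8\right)\cdot\T=2\,S_7\cdot\T$ and $X=\left(P_7+2\,P_8\right)-2\,S_7\in\mfr{z}$ annihilates $\T$. On the generic stratum, however, the centralizer no longer suffices — already its purely $W$-rotational part $\mrm{span}\left(S_5,S_6,S_7\right)$ has trivial stabiliser — so the extra kernel vector must be sought inside the block $\mfr{p}$ by solving the reduced, $V$-isotypic linear system; there one would write the candidate $\omega\left(\T\right)\in\mfr{p}$ as an explicit expression in the $16$ parameters and check that it is a non-zero solution throughout the stratum.

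I expect the generic stratum to be the main obstacle, precisely because the additional symmetry there leaves the centralizer and lies in $\mfr{p}$: one loses the commuting structure that makes the special cases transparent and must instead verify, uniformly in the parameters, that the $V$-isotypic part of $\sigma_\T$ drops rank by at least one (equivalently, that all $18\times18$ minors of $\sigma_\T$ vanish on the whole family, with the zero locus cut out only by $\T=0$). The case of $\su_c\left(2\right)$ is handled by the identical scheme, carrying out the analogous decomposition of $\R^8$, of the invariant $3$-forms and of the centralizer; alternatively one may invoke the symmetry that $\su\left(2\right)$ and $\su_c\left(2\right)$ are mutual centralizers inside $\g_2$, which pairs up the two computations.
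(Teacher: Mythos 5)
Your reduction and structural scaffolding are sound, and they flesh out exactly what the paper leaves implicit (its entire proof is the sentence ``a direct computation proves the following''): if $\dim\iso\left(\T\right)=3$ and $\su(2)\subseteq\iso\left(\T\right)$ then $\iso\left(\T\right)=\su(2)$, so it suffices to exhibit one annihilator outside $\su(2)$; the invariant space $\left(\Lambda^3\left(\R^8\right)\right)_{\su(2)}=\Lambda^2_+V\ox W\op\Lambda^3W$ is indeed $16$-dimensional; your $\mfr{z}$ is the correct $6$-dimensional centralizer of $\su(2)$ inside $\spin(7)$ (checking the paper's defining relations for $\spin(7)$ on an $\su(2)$-invariant $2$-form confirms all six generators and the dimension count); $\mfr{p}\cong V^{\op3}$ is right; and your degenerate-stratum example checks out, since for $\T=\left(e_{12}+e_{34}\right)\wedge e_5$ one has $\left(P_7+2\,P_8\right)\cdot\T=2\,S_7\cdot\T$.

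The genuine gap is that your argument stops precisely where the content of the proposition lies. On the generic stratum you only announce that one ``would write the candidate $\omega\left(\T\right)\in\mfr{p}$ \ldots{} and check that it is a non-zero solution'' and that you ``expect'' the $V$-isotypic block to drop rank; no such $\omega\left(\T\right)$ is exhibited and no reason is given why the rank must drop for every $\T\neq0$. By your own decoupling this residual step is a concrete finite check — the $V$-isotypic part of $\Lambda^3\left(\R^8\right)$ is $V^{\op7}$, so the $\mfr{p}$-block of $\sigma_\T$ is a $7\times3$ matrix over $\End_{\su(2)}\left(V\right)\cong\mathbb{H}$ with entries linear in the $16$ parameters, and one must verify that its quaternionic rank is at most $2$ on every stratum where the $16\times6$ centralizer block has trivial kernel — but until that verification is carried out the proposition is asserted, not proved. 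A secondary inaccuracy: the $\su_c(2)$ case is not ``the identical scheme''. Since $\su(2)$ and $\su_c(2)$ are isomorphic but not conjugate in $\spin(7)$, the module structures differ: under $\su_c(2)$ one has $\R^8\cong\R\op\R^4\op\R^3$ with $\R^3$ the adjoint representation, and the space of invariant $3$-forms is only $5$-dimensional, so the computation is genuinely different (in fact smaller and easier); likewise the ``mutual centralizers inside $\g_2$'' symmetry does not by itself transport the conclusion from one algebra to the other.
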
\noindent
Since we restricted the consideration to parallel characteristic torsion the holonomy algebra
$\hol\left(\nabla^c\right)$ is a subalgebra of $\iso\left(\T^c\right)$,
\be
\hol\left(\nabla^c\right)\leqslant\iso\left(\T^c\right)<\spin\left(7\right).
\ee

Conversely, fix $\h\leqslant\g<\spin\left(7\right)$. Suppose there exists a $\Spin\left(7\right)$-manifold
with $\hol\left(\nabla^c\right)=\h$ and $\nabla^c$-parallel torsion $\T^c\neq0$ satisfying $\iso\left(\T^c\right)=\g$. 
Then $\T^c$ is necessarily contained in the space of $\g$-invariant $3$-forms $\left(\Lambda^3\left(\R^8\right)
\right)_\g$ satisfying
\begin{align}\label{eqn:2}
\left(\T^c\right)^2\cdot\Psi &=7\,\|\T^c_8\|^2\cdot\Psi,&
-4\,\Ric^c\left(X\right)\cdot\Psi &=\left(\left(\T^c\right)^2-7\,\|\T^c_8\|^2\right)\cdot X\cdot\Psi \tag{$\bullet$}
\end{align}
for all $\h$-invariant spinors $\Psi\in\left(\Delta_8\right)_\h$ and all vectors $X\in\R^8$ (cf.\ proposition
\ref{prop:1}). Furthermore, two torsion forms $0\neq\T^c_1,\T^c_2\in\left(\Lambda^3\left(\R^8\right)\right)_\g$ define
equivalent geometric structures if they are equivalent under the action of the algebra
\be
\mfr{inv}\left(\Lambda^3\left(\R^8\right)\right)_\g:=\left\{x\in\spin\left(7\right)\,\,:\,\,x\left(
\Lambda^3\left(\R^8\right)\right)_\g\subseteq\left(\Lambda^3\left(\R^8\right)\right)_\g\right\}.
\ee
Define the \emph{space $\mca{K}\left(\h\right)$ of algebraic curvature tensors with values in $\h$} by
\be
\mca{K}\left(\h\right):=\left\{\mrm{R}\in\Lambda^2\left(\R^8\right)\ox\h\,\,:\,\,
\sigma_{X,Y,Z}\left\{\mrm{R}\left(X,Y,Z,V\right)\right\}=0\,\,\,\all\,X,Y,Z,V\in\R^8\right\}.
\ee
Here $\sigma_{X,Y,Z}$ denotes the cyclic sum over $X,Y,Z$. If the space $\mca{K}\left(\hol\left(\nabla^c
\right)\right)$ is trivial for a $\Spin\left(7\right)$-manifold with parallel torsion, the curvature operator $\mrm{R}^c\,:\,\Lambda^2\left(\R^8\right)\ra\hol\left(\nabla^c\right)$ of the characteristic connection is
$\nabla^c$-parallel (see \cite{CS04}) and thus $\hol\left(\nabla^c\right)$-invariant. A case-by-case study
proves the following:
\begin{prop}\label{prop:2}
Let $\h\leqslant\g<\spin\left(7\right)$ with $\g$ non-Abelian and suppose there exists a non-trivial
$\g$-invariant $3$-form. Then $\mca{K}\left(\h\right)$ is non-trivial if and only if $\su\left(2\right)\leqslant\h$.
\end{prop}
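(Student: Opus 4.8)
The plan is to reinterpret $\mca{K}\left(\h\right)$ as the kernel of a single linear map and then exploit monotonicity in $\h$. Since the first Bianchi identity forces the pair symmetry of $R$, an element of $\mca{K}\left(\h\right)$ is nothing but a symmetric endomorphism of $\Lambda^2\left(\R^8\right)$ whose image lies in $\h$, i.e.\ $\mca{K}\left(\h\right)=S^2\left(\h\right)\cap\ker b$, where $b$ denotes the Bianchi map into $\Lambda^4\left(\R^8\right)$. Writing $R=\sum_{i,j}R_{i\!j}\,\omega_i\ox\omega_j$ with $R_{i\!j}=R_{j\!i}$ in a basis $\omega_1,\ldots,\omega_r$ of $\h$, a direct computation identifies $b|_{S^2\left(\h\right)}$, up to a non-zero constant, with the \emph{wedge map}

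\be

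S^2\left(\h\right)\lra\Lambda^4\left(\R^8\right),\qquad\left(R_{i\!j}\right)\lmapsto\sum_{i,j}R_{i\!j}\,\omega_i\wedge\omega_j .

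\ee

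Thus $\mca{K}\left(\h\right)=0$ if and only if this map is injective. I would also record the evident monotonicity: $\h_1\leqslant\h_2$ gives $S^2\left(\h_1\right)\subseteq S^2\left(\h_2\right)$ and hence $\mca{K}\left(\h_1\right)\subseteq\mca{K}\left(\h_2\right)$, which I shall use in both directions.

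For the implication ``$\Leftarrow$'' monotonicity reduces the claim to $\h=\su\left(2\right)=\mrm{span}\left(P_5,P_6,P_7\right)$. These are precisely the anti-self-dual $2$-forms of $\mrm{span}\left(e_1,\ldots,e_4\right)$, and I expect the evaluation $P_a\wedge P_b=-2\,\delta_{ab}\,e_{1234}$ for $a,b\in\left\{5,6,7\right\}$. The wedge map then sends $\left(R_{ab}\right)$ to $-2\,\tr\left(R\right)\,e_{1234}$, whose kernel is the $5$-dimensional space of trace-free symmetric matrices; in particular $\mca{K}\left(\su\left(2\right)\right)\neq0$, so $\mca{K}\left(\h\right)\neq0$ for every $\h\geqslant\su\left(2\right)$.

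For the implication ``$\Rightarrow$'' I must show $\mca{K}\left(\h\right)=0$ whenever $\su\left(2\right)\not\leqslant\h$. As this class of subalgebrae is closed under passing to subalgebrae, monotonicity reduces the task to the \emph{maximal} subalgebrae avoiding $\su\left(2\right)$. Combining the inclusions of theorem \ref{thm:1} with Dynkin's classification of the $A_1$-subalgebrae of $\g_2$, these form a short explicit list: the remaining copies of $\su\left(2\right)$, namely $\so\left(3\right)$, $\so_{ir}\left(3\right)$ and $\su_c\left(2\right)$, together with $\R\op\su_c\left(2\right)$ and the maximal tori $\mfr{t}^2$, $\tilde{\mfr{t}}^2$. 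For each I would write out the products $\omega_i\wedge\omega_j$ explicitly and check linear independence in $\Lambda^4\left(\R^8\right)$, so that the wedge map is injective; for instance, for $\tilde{\mfr{t}}^2$ the three products already separate along $e_{1234}$, $e_{1256}$ and $e_{5678}$, forcing $R=0$. Monotonicity then propagates the vanishing to every subalgebra avoiding $\su\left(2\right)$.

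The hard part will be twofold. First, I must be sure that the algebrae above genuinely exhaust the maximal subalgebrae avoiding $\su\left(2\right)$ up to conjugacy; this is exactly where Dynkin's results on $\g_2$ are needed, to exclude any further intermediate or diagonally embedded copy of $\su\left(2\right)$. Second, the injectivity checks are real computations -- for $\R\op\su_c\left(2\right)$ the space $S^2\left(\R\op\su_c\left(2\right)\right)$ is $10$-dimensional and one must confirm that the corresponding ten $4$-forms are independent. The conceptual point behind the whole dichotomy is the sharp contrast with the anti-self-dual $\su\left(2\right)$: only there do all wedge products $\omega_i\wedge\omega_j$ collapse onto the single line $\R\,e_{1234}$ (reflecting the Ricci-flat, hyper\-K\"ahler nature of such curvature), which is precisely what produces a non-trivial kernel.
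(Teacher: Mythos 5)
Your proposal is correct and is in essence the paper's own argument: the paper proves the proposition by exactly such a case-by-case study, and your reformulation of $\mca{K}\left(\h\right)$ as the kernel of the wedge map on $S^2\left(\h\right)$ (via the pair symmetry forced by the Bianchi identity), together with monotonicity and the reduction to the maximal subalgebrae avoiding $\su\left(2\right)$, is the standard way to organize that computation. Your explicit checks are accurate ($P_a\wedge P_b=-2\,\delta_{ab}\,e_{1234}$ does hold, the $\tilde{\mfr{t}}^2$ products do separate along $e_{1234}$, $e_{1256}$, $e_{5678}$, and the list $\so\left(3\right)$, $\so_{ir}\left(3\right)$, $\R\op\su_c\left(2\right)$, $\mfr{t}^2$, $\tilde{\mfr{t}}^2$ is indeed exhaustive up to conjugacy), and the outcome matches the table in subsection \ref{ssec:5.5}.
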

The recipe to obtain necessary conditions on $\T^c$ and $\Ric^c$ goes as follows:
\bnum
\item Fix $\h=\hol\left(\nabla^c\right)\leqslant\iso\left(\T^c\right)=\g$ with $\g<\spin\left(7\right)$
      non-Abelian.
\item Determine the spaces $\left(\Lambda^3\left(\R^8\right)\right)_\g$ and $\left(\Delta_8\right)_\h$.
\item Solve (\ref{eqn:2}).
\item Quotient out the action of $\mfr{inv}\left(\Lambda^3\left(\R^8\right)\right)_\g$ on $\T^c\neq0$.
\item If $\su\left(2\right)\nleqslant\h$, analyze the $\h$-invariance and the symmetry of $\mrm{R}^c$.
\enum
Applying this, we determine $\T^c$ and $\Ric^c$ for all admissible combinations of $\hol\left(\nabla^c\right)$ and
non-Abelian $\iso\left(\T^c\right)$,
\be
\iso\left(\T^c\right) = \g_2,\,\, \su\left(3\right),\,\, \su\left(2\right)\op\su_c\left(2\right),\,\, \un\left(2\right),
\,\, \R\op\su_c\left(2\right),\,\,\so\left(3\right),\,\, \so_{ir}\left(3\right),\,\, \R\op\su\left(2\right).
\ee
The condition of (5), $\su\left(2\right)\nleqslant\hol\left(\nabla^c\right)$, is satisfied for $\hol\left(\nabla^c\right)
\leqslant\R\op\su_c\left(2\right)$, $\so\left(3\right)$, $\so_{ir}\left(3\right)$ or $\tilde{\mfr{t}}^2$. For clarity
we define the following forms:
\begin{align*}
Z_1&:=e_{12}+e_{34}, & Z_2&:=e_{56}, & Z_3&:=e_{12}-e_{34}, & D_1&:=e_{246}-e_{145},\\
D_2&:= -e_{235}-e_{136}, & D_3&:= -e_{135}+e_{245}, & D_4&:= e_{146}+e_{236}, & D_5&:= e_{123}-e_{356},
\end{align*}
so that we have
\begin{align*}
Z &= Z_1 + Z_2, & D &= D_1 + D_2, & \bar{D} &:= D_3 + D_4.
\end{align*}
\subsection{The cases \texorpdfstring{$\bm{\iso\left(\mathbf{T}^c\right)=\g_2}$, $\bm{\su\left(2\right)\op\su_c\left(2\right)}$, 
$\bm{\R\op\su_c\left(2\right)}$, $\bm{\so_{ir}\left(3\right)}$}{iso(T^c)=g_2, su(2)+su_c(2), R+su_c(2), so_ir(3)}}
The characteristic torsion is an element of the family
\be
\T^c=a_1\cdot\left( Z \wedge e_7 + D \right) + b_1\cdot\left( \left(Z_1-6\,Z_2\right)\wedge e_7 + D \right)
+ b_2\cdot\left(Z_3\wedge e_8\right),
\ee
where $a_1,b_1,b_2\in\R$. The constraints on the torsion parameters relative to the considered
isotropy algebrae are arranged in the following table:
\btab{|c|c|c|c|c}
\cline{1-4}
$\iso\left(\T^c\right)$ & $\g_2$, $\so_{ir}\left(3\right)$ & $\su\left(2\right)\op\su_c\left(2\right)$ &
$\R\op\su_c\left(2\right)$&\\
\cline{1-4}
constraints & $b_1=b_2=0$ & $b_1\neq0$, $b_2=0$ & $b_2\neq0$&$\!\!.$\\
\cline{1-4}
\etab

The corresponding $\Spin\left(7\right)$-structure is of type $\W_1$ or $\W_2$ if and only if $a_1=0$ or
$b_1=b_2=0$ respectively. The characteristic Ricci tensor $\Ric^c$ has the shape
\be
\Ric^c = \diag\left(\lambda,\lambda,\lambda,\lambda,\kappa,\kappa,\kappa,0\right)
\ee
depending on the parameters of the torsion form,
\begin{align*}
\lambda &= 3\,\left(a_1+b_1\right)\left(4\,a_1-3\,b_1\right) - b_2^2, & \kappa &= 4\,\left(a_1+b_1\right)\left(3\,a_1-4\,b_1\right).
\end{align*}

We proceed with the holonomy classification. System (\ref{eqn:2}) becomes inconsistent for
$\su\left(2\right)\leqslant\h\leqslant\g=\g_2$. Moreover, we deduce $\kappa=0$ in the case of $\hol\left(
\nabla^c\right)=\un\left(2\right)$ or $\su\left(2\right)$ with $\iso\left(\T^c\right)=\su\left(2\right)\op
\su_c\left(2\right)$.
\subsubsection{The subcases \texorpdfstring{$\hol\left(\nabla^c\right)\leqslant\R\op\su_c\left(2\right)$, $\so\left(3\right)$}
{hol(\nabla^c)<=R+su_c(2), so(3)}}
Applying step (5) we are able to compute the characteristic curvature tensor
\be
\mrm{R}^c = r_1\cdot\left( P_7\ox P_7 \right) 
+ r_2\cdot\left( \left(P_7+2\,P_8\right)\ox\left(P_7+2\,P_8\right) + Q_5\ox Q_5 + Q_6 \ox Q_6 \right),
\ee
which depends on the torsion parameters in the following way:
\begin{align*}
&r_1 =  \frac{3}{8}\,\kappa-\lambda = -\frac{3}{2}\,\left(a_1+b_1\right)\left(5\,a_1-2\,b_1\right) + b_2^2,
&r_2 &= -\frac{1}{8}\,\kappa.
\end{align*}
We arranged the necessary conditions on the parameters $r_1$ and $r_2$ for each holonomy
algebra $\hol\left(\nabla^c\right)\leqslant\R\op\su_c\left(2\right)$ or $\so\left(3\right)$ in the following table:
\btab{|c|c|c|c|c}
\cline{1-4}
$\hol\left(\nabla^c\right)$ & $\su_c\left(2\right)$ & $\mfr{t}^2$, $\mfr{t}^1\left[l=0\right]$ & $\so\left(3\right)$,
$\mfr{t}^1\left[l\neq0\right]$, $\mfr{0}$&\\
\cline{1-4}
constraints & $r_1=0$ & $r_2=0$ & $r_1=r_2=0$&$\!\!.$\\
\cline{1-4}
\etab
Here $\mfr{0}$ denotes the zero algebra.
\subsubsection{The subcase \texorpdfstring{$\hol\left(\nabla^c\right)=\so_{ir}\left(3\right)$}{hol(\nabla^c)=so_ir(3)}}
There exists only one $\so_{ir}\left(3\right)$-invariant curvature tensor $\mrm{R}^c\,:\,\Lambda^2\left(\R^8\right)
\ra\so_{ir}\left(3\right)$, namely the projection onto the algebra $\so_{ir}\left(3\right)$,
\begin{align*}
\mrm{R}^c = -a_1^2\cdot\left( U_1\ox U_1 + U_2\ox U_2 + U_3\ox U_3 \right).
\end{align*}
Here $\left(U_1,U_2,U_3\right)$ denotes the following basis of $\so_{ir}\left(3\right)$:
\begin{align*}
U_1&:= \sqrt{5/2}\,P_5-\sqrt{3/2}\, Q_2, & U_2&:= \sqrt{5/2}\,P_6+\sqrt{3/2}\, Q_1, &
U_3&:= P_7+3\,P_8.
\end{align*}
\subsection{The cases \texorpdfstring{$\bm{\iso\left(\mathbf{T}^c\right)=\so\left(3\right)}$, $\bm{\su\left(3
\right)}$}{iso(T^c)=so(3), su(3)}}
There are two admissible families of characteristic torsions. The first one depends on a single
positive parameter,
\be
\T^c_{I} = a_1\cdot Z\wedge e_7, \quad a_1\in\R, \,\,a_1>0,
\ee
whilst the second is a $3$-parameter family
\be
\T^c_{I\!I} = a_1\cdot\bar{D} + a_2\cdot\left(2\,D_1+5\,D_2+3\,D_5\right)
           + b_1\cdot\left(D_1-\,D_2-2\,D_5\right)
\ee
with $a_1,a_2,b_1\in\R$, $b_1>0$. The isotropy algebra of type $I$ is the algebra $\su\left(3\right)$,
i.e.\ $\iso\left(\T^c_I\right)=\su\left(3\right)$. The condition $\iso\left(\T^c_{I\!I}\right)=\su\left(3\right)$
holds if and only if $a_1=0$, $b_1=\frac{3}{2}\,a_2$,
\be
\T^c_{I\!I}\sim D=D_1+D_2.
\ee

The $\Spin\left(7\right)$-structures of this subsection are not of type $\W_2$. They are of type $\W_1$ if only
if $\T^c$ is of type $I\!I$ and $a_1=a_2=0$. The characteristic Ricci tensor is
\be
\Ric^c = \diag\left(\lambda,\lambda,\lambda,\lambda,\lambda,\lambda,0,0\right)
\ee
and depends on the torsion type
\begin{align*}
\lambda_{I} &= 2\,a_1^2, &
\lambda_{I\!I} &= 4\,a_1^2+4\,\left(2\,a_2+b_1\right)\left(5\,a_2-b_1\right)
\end{align*}
accordingly.
\subsubsection{The subcase \texorpdfstring{$\hol\left(\nabla^c\right)\leqslant\so\left(3\right)$}{hol(\nabla^c)<=so(3)}}
The curvature operator $\mrm{R}^c\,:\,\Lambda^2\left(\R^8\right)\ra\so\left(3\right)$ is the projection onto the
subalgebra $\so\left(3\right)<\spin\left(7\right)$ scaled by the parameter $\lambda$ above,
\be
\mrm{R}^c = -\frac{\lambda}{2}\cdot\left( V_1\ox V_1 + V_2\ox V_2 + V_3\ox V_3 \right).
\ee
The basis $\left(V_1,V_2,V_3\right)$ of $\so\left(3\right)$ is
\begin{align*}
V_1&:= \sqrt{1/2}\,\left(P_1+P_5\right), & V_2&:= \sqrt{1/2}\,\left(P_2+P_6\right), &
V_3&:= P_7+P_8.
\end{align*}
For $\hol\left(\nabla^c\right)=\mfr{t}^1<\so\left(3\right)$ and in the case of trivial holonomy, $\hol\left(\nabla^c
\right)=\mfr{0}$, the parameter $\lambda$ has to vanish necessarily.
\subsubsection{The subcase \texorpdfstring{$\hol\left(\nabla^c\right)=\mfr{t}^2$}{hol(\nabla^c)=t^2}}
The characteristic curvature $\mrm{R}^c\,:\,\Lambda^2\left(\R^8\right)\ra\mfr{t}^2$ is
\be
\mrm{R}^c = -\frac{\lambda}{4}\cdot\left( 3\left( P_7\ox P_7 \right) + \left(P_7+2\,P_8\right)\ox\left(P_7+2\,P_8\right) \right)
\ee
for the parameter $\lambda$ above.
\subsection{The case \texorpdfstring{$\bm{\iso\left(\mathbf{T}^c\right)=\un\left(2\right)}$}{iso(T^c)=u(2)}}
The characteristic torsion is an element of one of the following two $3$-parameter families:
\be
\T^c_{I} = a_1\cdot\left(Z_1+5\,Z_2\right)\wedge e_8 + a_2\cdot\left(Z_1+5\,Z_2\right)\wedge e_7 
        + b_1\cdot\left(Z_1-2\,Z_2\right)\wedge e_7,
\ee
\be
\T^c_{I\!I} = a_1\cdot\left( \left(Z_1-2\,Z_2\right)\wedge e_8 + \frac{7}{4}\,\bar{D} \right) 
           + a_2\cdot\left( \left(Z_1-2\,Z_2\right)\wedge e_7 + \frac{7}{4}\,D \right)
           + b_1\cdot\left(Z_1-2\,Z_2\right)\wedge e_7.
\ee
Here $a_1,a_2,b_1\in\R$, $b_1>0$. Not all parameter configurations are admissible: the isotropy algebra
$\iso\left(\T^c_I\right)$ of type $I$ contains the algebra $\su\left(2\right)\op\su_c\left(2\right)$
if $a_1=0$ and $b_1=-a_2$, and the condition $\su\left(2\right)\op\su_c\left(2\right)\leqslant\iso\left(\T^c_{I\!I}
\right)$ holds if $a_1=0$ and $b_1=\frac{3}{4}\,a_2$. The isotropy algebra is $\iso\left(\T^c\right)=\su\left(3\right)$
if and only if $a_1=0$ and $b_1=\frac{4}{3}\,a_2$ (for type $I$) or $a_1=0$ and $b_1=-a_2$ (for type
$I\!I$) respectively. These four cases have to be excluded. 

The considered $\Spin\left(7\right)$-structures are not of type $\W_2$. Those of type $\W_1$ satisfy $a_1=a_2=0$.
In this particular case both torsion families coincide, i.e.\ $\T^c_{I}=\T^c_{I\!I}$. The Ricci tensor
of $\nabla^c$ is
\be
\Ric^c = \diag\left(\lambda,\lambda,\lambda,\lambda,\kappa,\kappa,0,0\right).
\ee
The constants $\lambda$ and $\kappa$ depend on the torsion type,
\begin{align*}
\lambda_{I} &= 6\,a_1^2+\left(a_2+b_1\right)\left(6\,a_2-b_1\right), &
\kappa_{I} &= 10\,a_1^2+2\,\left(a_2+b_1\right)\left(5\,a_2-2\,b_1\right),\\
\lambda_{I\!I} &= \frac{45}{4}\,\left(a_1^2+a_2^2\right)-2\,a_2\,b_1-b_1^2, &
\kappa_{I\!I} &= \frac{33}{4}\,\left(a_1^2+a_2^2\right)-8\,a_2\,b_1-4\,b_1^2 .
\end{align*}
\subsubsection{The subcase \texorpdfstring{$\hol\left(\nabla^c\right)\leqslant\mfr{t}^2$}{hol(\nabla^c)<=t^2}}
Proposition \ref{prop:2} allows to compute the curvature tensor of the characteristic connection,
\be
\mrm{R}^c = r_1\cdot\left( P_7\ox P_7 \right) + r_2\cdot\left(P_7+2\,P_8\right)\ox\left(P_7+2\,P_8\right),
\ee
where $r_1$ and $r_2$ are given in terms of the parameters $\lambda$ and $\kappa$ above,
\begin{align*}
r_1 &= \frac{1}{4}\,\kappa-\lambda, & r_2 &= -\frac{1}{4}\,\kappa.
\end{align*}
Conditions on these parameters, given a specific holonomy algebra $\hol\left(\nabla^c\right)\leqslant\mfr{t}^2$,
are the following:
\btab{|c|c|c|c|c}
\cline{1-4}
$\hol\left(\nabla^c\right)$ & $\mfr{t}^1\left[k=0\right]$ & $\mfr{t}^1\left[l=0\right]$  & $\mfr{t}^1\left[k,l\neq0\right]$,
$\mfr{0}$&\\
\cline{1-4}
constraints & $r_1=0$ & $r_2=0$ & $r_1=r_2=0$&$\!\!.$\\
\cline{1-4}
\etab
The condition $r_1=r_2=0$ can only be realized for $\T^c_I$ with $a_1=0$ and $b_1=-a_2$, one of
the excluded possibilities. Consequently, there exists no $\Spin\left(7\right)$-structure with
parallel characteristic torsion, $\iso\left(\T^c\right)=\un\left(2\right)$ and $\hol\left(
\nabla^c\right)=\mfr{t}^1\left[k,l\neq0\right]$ or $\mfr{0}$.
\subsection{The case \texorpdfstring{$\bm{\iso\left(\mathbf{T}^c\right)=\R\op\su\left(2\right)}$}{iso(T^c)=R+su(2)}}
Here the characteristic torsion form $\T^c$ is an element of the $1$-parameter family
\be
\T^c = b_1\cdot\left(D_3-D_4\right), \quad b_1\in\R, \,\,b_1>0,
\ee
and $\Ric^c$ is given by
\be
\Ric^c = \diag\left(0,0,0,0,-4\,b_1^2,-4\,b_1^2,0,0\right).
\ee
The corresponding $\Spin\left(7\right)$-structure is of type $\W_1$. The Ricci tensor of a $\tilde{\mfr{t}}^2$-invariant
and symmetric curvature operator is an element of the $3$-parameter family
\be
\diag\left(\alpha+\beta+\gamma,\alpha+\beta+\gamma,\alpha+\beta-\gamma,\alpha+\beta-\gamma,4\,\beta,4\,\beta,
16\,\beta,16\,\beta\right),\quad \alpha,\beta,\gamma\in\R.
\ee
Thus $\R\op\su\left(2\right)$ is the only admissible (i.e.\ $\T^c\neq0$) characteristic holonomy algebra for
$\iso\left(\T^c\right)=\R\op\su\left(2\right)$.
\subsection{The admissible isotropy and holonomy algebrae}\label{ssec:5.5}
Summarizing the previous subsections, the following table provides an overview of the isotropy and
holonomy algebrae which comply with the requirements of steps (1) to (5) and lead to non-vanishing
characteristic torsion:
\btab{|c||c|c|c}
\cline{1-3}
$\iso\left(\T^c\right)$ & \multicolumn{2}{c|}{$\hol\left(\nabla^c\right)$}&\\
\cline{2-3}
& $\mca{K}\left(\hol\left(\nabla^c\right)\right)\neq0$ & $\mca{K}\left(\hol\left(\nabla^c\right)\right)=0$
($\Ra\!\nabla^c\mrm{R}^c=0$)&\\
\cline{1-3}\cline{1-3}
$\g_2$ & $\g_2$, $\su\left(2\right)\op\su_c\left(2\right)$ & $\R\op\su_c\left(2\right)$, $\so_{ir}\left(3\right)$&\\
\cline{1-3}
$\su\left(3\right)$ & $\su\left(3\right)$, $\un\left(2\right)$ & $\so\left(3\right)$, $\mfr{t}^2$&\\
\cline{1-3}
$\su\left(2\right)\op\su_c\left(2\right)$ & $\su\left(2\right)\op\su_c\left(2\right)$, $\un\left(2\right)$, $\su\left(
2\right)$ & $\R\op\su_c\left(2\right)$, $\su_c\left(2\right)$, $\so\left(3\right)$, $\mfr{t}^2$, $\mfr{t}^1$, $\mfr{0}$&\\
\cline{1-3}
$\un\left(2\right)$ & $\un\left(2\right)$, $\su\left(2\right)$ & $\mfr{t}^2$, $\mfr{t}^1$&\\
\cline{1-3}
$\R\op\su_c\left(2\right)$ & --- & $\R\op\su_c\left(2\right)$, $\su_c\left(2\right)$, $\mfr{t}^2$, $\mfr{t}^1$, $\mfr{0}$&\\
\cline{1-3}
$\so\left(3\right)$ & --- & $\so\left(3\right)$, $\mfr{t}^1$, $\mfr{0}$&\\
\cline{1-3}
$\so_{ir}\left(3\right)$ & --- & $\so_{ir}\left(3\right)$&\\
\cline{1-3}
$\R\op\su\left(2\right)$ & $\R\op\su\left(2\right)$ & ---&$\!\!.$\\
\cline{1-3}
\etab
%
%
%
%
\section{Geometric results}\label{sec:6}\noindent
In this section we discuss the geometries related to the algebraic cases of \autoref{sec:5}. The most
important tool in these considerations is the splitting theorem of de Rham generalized to geometric
structures with totally skew-symmetric torsion (see \cite{CM08}):
\begin{thm}
Let $\left(M^n,g,\T\right)$ be a complete, simply connected Riemannian manifold with $3$-form $\T$. Suppose
the tangent bundle
\be
TM^n=TM_+\op TM_-
\ee
splits under the action of the holonomy group of $\nabla_X Y = \nabla^g_X Y + \frac{1}{2}\cdot \T\left(X,Y,\,
\cdot\,\right)$ so that
\begin{align}\label{eqn:3}
\T\left(X_+,X_-,\,\cdot\,\right)&=0, & \T\left(X_+,Y_+,\,\cdot\,\right)&\in TM_+,
& \T\left(X_-,Y_-,\,\cdot\,\right)&\in TM_-\tag{$\ast$}
\end{align}
for all $X_+,Y_+\in TM_+$ and $X_-,Y_-\in TM_-$. Let $\T=\T_++\T_-$ denote the corresponding
decomposition of the $3$-form $\T$. Then $\left(M,g,\T\right)$ is isometric to a Riemannian product
\be
\left(M_+,g_+,\T_+\right)\x\left(M_-,g_-,\T_-\right).
\ee
The condition $\nabla\T=0$ results in $\nabla^+\T_+=0$, $\nabla^-\T_-=0$ for
\begin{align*}
\nabla^+_X Y &:= \nabla^{g_+}_X Y + \frac{1}{2}\cdot \T_+\left(X,Y,\,\cdot\,\right), &
\nabla^-_X Y &:= \nabla^{g_-}_X Y + \frac{1}{2}\cdot \T_-\left(X,Y,\,\cdot\,\right).
\end{align*}
\end{thm}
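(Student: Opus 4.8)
The plan is to reduce the statement to the classical de Rham splitting theorem for the Levi-Civita connection and then to track the behavior of the torsion. First I would reinterpret the hypothesis: since the holonomy group of $\nabla$ preserves the orthogonal decomposition $TM^n = TM_+ \op TM_-$, both distributions are $\nabla$-parallel, that is, $\nabla_X Y \in TM_+$ whenever $Y \in TM_+$, and symmetrically for $TM_-$.

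The decisive step is to upgrade this $\nabla$-parallelism to $\nabla^g$-parallelism. Using $g\left(\nabla^g_X Y, Z\right) = g\left(\nabla_X Y, Z\right) - \tfrac12\,\T\left(X,Y,Z\right)$, I would take $Y \in TM_+$ and $Z \in TM_-$ and show the right-hand side vanishes for every $X$: the first term is zero because $\nabla_X Y \in TM_+$ is orthogonal to $Z\in TM_-$, while the second vanishes by (\ref{eqn:3}). Splitting $X = X_+ + X_-$, the piece $\T\left(X_+,Y,Z\right)$ dies because $\T\left(X_+,Y,\cdot\right)\in TM_+$ is orthogonal to $Z$, and $\T\left(X_-,Y,Z\right)$ dies because it is a mixed term, $\T\left(X_+,X_-,\cdot\right)=0$. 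The symmetric computation, which this time invokes $\T\left(X_-,Y_-,\cdot\right)\in TM_-$, handles $TM_-$. Hence $\nabla^g$ preserves the orthogonal splitting, so both distributions are integrable with totally geodesic leaves; note that all three conditions in (\ref{eqn:3}) are exactly what is needed here.

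With $\left(M^n,g\right)$ complete and simply connected, the classical de Rham theorem then yields the isometry $\left(M,g\right)\cong\left(M_+,g_+\right)\x\left(M_-,g_-\right)$, under which $\nabla^g$ becomes the product connection and $TM_\pm$ the tangent bundles of the factors. Conditions (\ref{eqn:3}) say precisely that $\T$ has no mixed legs, giving the pointwise decomposition $\T = \T_+ + \T_-$ with $\T_\pm$ horizontal to the respective factor; substituting into $\nabla = \nabla^g + \tfrac12\,\T$ then exhibits $\nabla$ as the sum of the two candidate connections $\nabla^+$ and $\nabla^-$.

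The last sentence I would obtain by projecting $\nabla\T = 0$ onto the two summands: differentiating $\T_+ + \T_-$ and using that $\nabla^g$ is a product connection forces $\nabla^+\T_+ = 0$ and $\nabla^-\T_- = 0$ separately, and in particular the vanishing of the transverse derivatives shows that each $\T_\pm$ is constant along the other factor, so that it genuinely descends to a form on $M_\pm$. The main obstacle, I expect, is exactly this last point together with its reliance on the parallel-torsion hypothesis: without $\nabla\T=0$ the components $\T_\pm$ may depend on the transverse variable (for instance a volume form on $M_+$ rescaled by a function on $M_-$ still satisfies (\ref{eqn:3}) and leaves the splitting $\nabla$-invariant), so the genuine product-of-structures conclusion rests on showing that parallelism kills this dependence.
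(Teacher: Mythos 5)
Your argument is correct, but there is nothing in the paper to compare it against: the theorem is quoted without proof from Cleyton--Moroianu \cite{CM08}, so the paper supplies only the citation. Your route is the natural one and every step checks out: translating holonomy-invariance into $\nabla$-parallelism of the two distributions, then using $g\left(\nabla^g_XY,Z\right)=g\left(\nabla_XY,Z\right)-\frac{1}{2}\,\T\left(X,Y,Z\right)$ with $Y\in TM_+$, $Z\in TM_-$ to upgrade the splitting to a $\nabla^g$-parallel one, and finally invoking the classical de Rham theorem. Two remarks. First, condition $(\ast)$ is partially redundant, so your computation needs less than you invoke: by total skew-symmetry the single identity $\T\left(X_+,X_-,\,\cdot\,\right)=0$ already kills every component of $\T$ with mixed legs, and hence forces $\T\left(X_+,Y_+,\,\cdot\,\right)\in TM_+$ and $\T\left(X_-,Y_-,\,\cdot\,\right)\in TM_-$ automatically, e.g.\ $\T\left(X_+,Y_+,Z_-\right)=-\T\left(X_+,Z_-,Y_+\right)=0$; your parenthetical that ``all three conditions are exactly what is needed'' overstates the independence of the hypotheses. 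Second, you are right to flag the descent issue, and your resolution is the correct one: without $\nabla\T=0$ a torsion such as $f\cdot\vol_+$ with $f$ a function of the $M_-$-variable satisfies $(\ast)$ and leaves the splitting $\nabla$-invariant while $\T_+$ fails to be a form on $M_+$, so the triple-product conclusion genuinely needs the parallelism; evaluating $\nabla_{X_-}\T=0$ on fields lifted from the $M_+$-factor (for which $\nabla_{X_-}A_+=\nabla^g_{X_-}A_+=0$, since the mixed torsion terms vanish) gives $X_-\left(\T_+\left(A,B,C\right)\right)=0$, so $\T_\pm$ descends to $M_\pm$, and the tangential components of $\nabla\T=0$ then read exactly $\nabla^+\T_+=0$ and $\nabla^-\T_-=0$, which is the theorem's final claim.
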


We split the consideration in the same manner as in \autoref{sec:5} and start with the most
obvious cases.
\subsection{The case \texorpdfstring{$\bm{\iso\left(\mathbf{T}^c\right)=\so_{ir}\left(3\right)}$}{iso(T^c)=so_ir(3)}}
The characteristic holonomy $\hol\left(\nabla^c\right)$ is equal to $\so_{ir}\left(3\right)$, the $\nabla^c$-parallel
torsion form $\T^c$ is proportional to $\left(Z\wedge e_7+D\right)$ and the tangent bundle of $\left(M^8,g,\Phi\right)$
splits into the following $\so_{ir}\left(3\right)$-invariant components:
\be
TM^8=E \op \R\cdot e_8.
\ee
There exist two spinor fields which are parallel with respect to $\nabla^c$, namely $\Psi_1-\Psi_2$
and $\Psi_9-\Psi_{10}$ (cf.\ \autoref{sec:4}). The curvature tensor $\mrm{R}^c$ is uniquely determined,
$\so_{ir}\left(3\right)$-invariant, $\nabla^c$-parallel and $\mrm{R}^c=\mrm{R}^c|_E\,\op\,0|_{\R\cdot e_8}$
(see \autoref{sec:5}). Since the torsion form $\T^c$ does not depend on $e_8$ and $\nabla^ce_8=0$,
we conclude that $e_8$ is $\nabla^g$-parallel. Consequently, a complete and simply connected $M^8$
is the Riemannian product of a $7$-dimensional manifold $Y^7$ with $\R$. We furthermore conclude
that the space $Y^7$ is isometric to a naturally reductive, nearly parallel $\G_2$-structure with
fundamental form $\left(Z\wedge e_7+D\right)$ and characteristic holonomy algebra $\so_{ir}\left(3
\right)$ (see \cite{FI02}). Consider the embedding of $\SO\left(3\right)$ into $\SO\left(5\right)$
given by the $5$-dimensional irreducible $\SO\left(3\right)$-representation. This gives rise to the
homogeneous naturally reductive space $\SO\left(5\right)/\SO_{ir}\left(3\right)$. With \cites{Fri06,
FKMS97} we obtain that $Y^7$ is isometric to $\SO\left(5\right)/\SO_{ir}\left(3\right)$.
\begin{thm}\label{thm:2}
A complete, simply connected $\Spin\left(7\right)$-manifold with parallel characteristic torsion, $\nabla^c\T^c=0$,
and $\iso\left(\T^c\right)=\so_{ir}\left(3\right)$ is isometric to the Riemannian product of $\SO\left(5\right)/
\SO_{ir}\left(3\right)$ with $\R$.
\end{thm}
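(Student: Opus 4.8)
The plan is to leverage the algebraic classification of \autoref{sec:5} to reduce the eight-dimensional problem to a known seven-dimensional one. First I would record the relevant input: when $\iso(\T^c)=\so_{ir}(3)$, the table of \autoref{ssec:5.5} forces $\hol(\nabla^c)=\so_{ir}(3)$, and the admissible torsion is proportional to $Z\wedge e_7+D$. Since the generators $P_5-\sqrt{3/5}\,Q_2$, $P_6+\sqrt{3/5}\,Q_1$, $P_7+3\,P_8$ of $\so_{ir}(3)$ involve no index $8$, the line $\R\cdot e_8$ is fixed, and $\so_{ir}(3)$ acts on $\R^8$ as the seven-dimensional irreducible representation on $E=\mrm{span}(e_1,\ldots,e_7)$ together with this trivial summand. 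This yields the $\nabla^c$-invariant decomposition $TM^8=E\oplus\R\cdot e_8$.

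The key observation is that $e_8$ is $\nabla^g$-parallel. Because $\T^c$ is proportional to $Z\wedge e_7+D$ we have $e_8\hook\T^c=0$, while $\R\cdot e_8$ being a holonomy-invariant line gives $\nabla^c e_8=0$; the defining relation $\nabla^c_X e_8=\nabla^g_X e_8+\tfrac{1}{2}\,\T^c(X,e_8,\cdot)$ then forces $\nabla^g e_8=0$. I would next verify the splitting hypotheses (\ref{eqn:3}) for $TM^8=E\oplus\R\cdot e_8$: the mixed term $\T^c(X_+,e_8,\cdot)$ vanishes and $\T^c$ is valued in $E$, both immediate from $e_8\hook\T^c=0$. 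The de Rham type splitting theorem then produces an isometry $M^8\cong Y^7\times N^1$, and since $e_8$ is globally parallel, completeness and simple connectivity identify $N^1=\R$.

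It remains to pin down $Y^7$. Its induced torsion $\T_+$ is proportional to $Z\wedge e_7+D$, which is the fundamental three-form of a $\G_2$-structure; as it is $\nabla^c$-parallel, $Y^7$ carries a nearly parallel $\G_2$-structure with characteristic holonomy $\so_{ir}(3)$ (see \cite{FI02}). Moreover $\mca{K}(\so_{ir}(3))$ is trivial (Proposition \ref{prop:2}, since the irreducible $\so_{ir}(3)$ does not contain the reducible $\su(2)$), so by \cite{CS04} one has $\nabla^c\mrm{R}^c=0$; together with $\nabla^c\T^c=0$ this makes $Y^7$ naturally reductive. The main obstacle is the seven-dimensional identification itself, which I would resolve not by direct construction but by appealing to the classification of homogeneous naturally reductive nearly parallel $\G_2$-manifolds in \cite{Fri06} and \cite{FKMS97}, isolating the one whose characteristic holonomy is the irreducible $\so_{ir}(3)$. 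Via the embedding $\SO(3)\hookrightarrow\SO(5)$ coming from the five-dimensional irreducible representation, this is the naturally reductive space $\SO(5)/\SO_{ir}(3)$, and reassembling the factors gives $M^8\cong\left(\SO(5)/\SO_{ir}(3)\right)\times\R$.
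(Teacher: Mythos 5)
Your proposal is correct and follows essentially the same route as the paper: split off the $\nabla^g$-parallel direction $e_8$ (using $e_8\hook\T^c=0$ and the $\so_{ir}\left(3\right)$-invariance of $\R\cdot e_8$) to get $M^8\cong Y^7\times\R$, recognize $Y^7$ as a naturally reductive nearly parallel $\G_2$-manifold with characteristic holonomy $\so_{ir}\left(3\right)$, and identify it as $\SO\left(5\right)/\SO_{ir}\left(3\right)$ via \cite{Fri06} and \cite{FKMS97}. Your additional verifications (the generators of $\so_{ir}\left(3\right)$ contain no index $8$, the splitting hypotheses $(\ast)$, and natural reductivity via Proposition \ref{prop:2} together with \cite{CS04}) merely spell out steps the paper leaves implicit.
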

\subsection{The cases \texorpdfstring{$\bm{\iso\left(\mathbf{T}^c\right)=\so\left(3\right)}$, $\bm{\su\left(3
\right)}$}{iso(T^c)=so(3), su(3)}}
The $\Spin\left(7\right)$-structure $\left(M^8,g,\Phi\right)$ admits two $\nabla^c$-parallel vector fields $e_7$,
$e_8$ and four $\nabla^c$-parallel spinor fields $\Psi_1$, $\Psi_2$, $\Psi_9$, $\Psi_{10}$. Moreover,
the differential forms $Z$ and $D$ are parallel with respect to $\nabla^c$, and we can reconstruct a
$\Spin\left(7\right)$-structure by using (\ref{eqn:1}).

There are two types of characteristic torsion. First we discuss type $I$: $\T^c\sim Z\wedge e_7$ and
$\iso\left(\T^c\right)=\su\left(3\right)$. The torsion form vanishes along $e_8$, i.e.\ $e_8\hook \T^c=0$,
and the tangent bundle splits into two $\hol\left(\nabla^c\right)$-invariant components,
\be
TM^8=E\op\R\cdot e_8.
\ee
Consequently, a complete, simply connected $M^8$ is isometric to the Riemannian product of a $7$-dimensional
manifold $Y^7$ with $\R$. The torsion $3$-form $\T^c$ is contained in the space $\Lambda^3_1\left(\R^7\right)\op
\Lambda^3_{27}\left(\R^7\right)$ of the decomposition of $\Lambda^3\left(\R^7\right)$ into irreducible $\G_2
$-components. Up to isometry the space $Y^7$ admits a cocalibrated $\G_2$-structure with fundamental form $\left(
Z\wedge e_7+D\right)$ and a characteristic connection with totally skew-symmetric torsion equal to $\T^c$. The
holonomy algebra of this connection coincides with $\hol\left(\nabla^c\right)$. Consequently, $Y^7$ is homothetic
to an $\eta$-Einstein Sasakian $7$-manifold with contact vector field $e_7$ and fundamental form $Z$ (see \cite{Fri06}).
\begin{thm}
Let $\left(M^8,g,\Phi\right)$ be a complete, simply connected $\Spin\left(7\right)$-manifold with parallel characteristic
torsion $\T^c$ and $\iso\left(\T^c\right)=\su\left(3\right)$. Suppose that the torsion form is of type $I$, i.e.\ $\T^c\sim
Z\wedge e_7$. Then $M^8$ is isometric to the Riemannian product of a $7$-dimensional, simply connected, $\eta$-Einstein
\be
\Ric^{\bar{g}}=10\cdot \bar{g}-4\cdot e_7\ox e_7
\ee
Sasakian manifold $\left(Y^7,\bar{g},e_7,Z\right)$ with $\R$. Conversely, such a product admits a $\Spin\left(7\right)
$-structure with parallel characteristic torsion and $\hol\left(\nabla^c\right)$ contained in $\su\left(3\right)$.
\end{thm}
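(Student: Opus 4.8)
The plan is to exhibit $M^8$ as a Riemannian product via the splitting theorem of \autoref{sec:6} and then to identify the seven-dimensional factor as Sasakian. First I would collect the $\nabla^c$-parallel objects at hand: since $\hol\left(\nabla^c\right)\leqslant\iso\left(\T^c\right)=\su\left(3\right)$ and $\su\left(3\right)$ fixes $e_7$, $e_8$ while acting only on $\mrm{span}\left(e_1,\ldots,e_6\right)$, the vector fields $e_7$, $e_8$ and the forms $Z$, $D$ are all $\nabla^c$-parallel. For type $I$ the torsion is $\T^c=a_1\cdot Z\wedge e_7$, so in particular $e_8\hook\T^c=0$.

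Next I would apply the splitting theorem with $TM_+=E:=\mrm{span}\left(e_1,\ldots,e_7\right)$ and $TM_-=\R\cdot e_8$. Both summands are $\hol\left(\nabla^c\right)$-invariant, and the conditions (\ref{eqn:3}) hold because $\T^c\in\Lambda^3\left(E\right)$ with $e_8\hook\T^c=0$. Completeness and simple connectivity then yield an isometry $M^8\cong Y^7\x\R$ with $\T^c_-=0$ (a flat $\R$-factor) and $\nabla^+\T^c=0$ on $Y^7$; in particular $Y^7$ is simply connected, and, the $\R$-factor being flat, $\hol\left(\nabla^c\right)=\hol\left(\nabla^+\right)$.

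I would then analyze $Y^7$. The $\nabla^c$-parallel three-form $Z\wedge e_7+D\in\Lambda^3\left(E\right)$ is a $\G_2$-form, and $\nabla^+$ is (by uniqueness) its characteristic connection, with $\nabla^+$-parallel torsion $\T^c=a_1\cdot Z\wedge e_7\in\Lambda^3_1\op\Lambda^3_{27}$; this is a cocalibrated $\G_2$-structure (cf.\ \cite{FI02}). The decisive step, which I expect to be the main obstacle, is to recognize this structure as an $\eta$-Einstein Sasakian one with contact form $e_7$ and fundamental two-form $Z$: from the contact shape of $\T^c$ and the $\su\left(3\right)$-invariance of $Z$, $D$ one must extract that $e_7$ is a unit Killing (Reeb) field whose orthogonal distribution carries a transverse K\"ahler--Einstein geometry, which is exactly the correspondence of \cite{Fri06}.

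Finally I would fix the scale and settle the converse. As $e_7$ is $\nabla^c$-parallel, $d e_7=e_7\hook\T^c=a_1\cdot Z$, so the Sasakian normalization $d e_7=2\,Z$ forces $a_1=2$. Since $\delta\T^c=0$, the identity $\Ric^g_{ij}=\Ric^c_{ij}+\frac{1}{4}\,\T^c_{imn}\T^c_{jmn}$ together with $\Ric^c=\diag\left(8,8,8,8,8,8,0,0\right)$ (from $\lambda=2\,a_1^2=8$ in \autoref{sec:5}) gives, after a short evaluation of $\T^c_{imn}\T^c_{jmn}$, the tensor $\Ric^g=\diag\left(10,10,10,10,10,10,6,0\right)$ on $M^8$, whose restriction to $Y^7$ is $\Ric^{\bar g}=10\cdot\bar g-4\cdot e_7\ox e_7$. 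For the converse I would set $\phi:=\left(Z\wedge e_7+D\right)\wedge e_8$ on $Y^7\x\R$ and define $\Phi:=\phi+\ast\phi$ as in (\ref{eqn:1}); the product of $\nabla^+$ with the flat connection on $\R$ preserves $\Phi$ and has totally skew-symmetric torsion, hence by uniqueness equals $\nabla^c$, its torsion $a_1\cdot Z\wedge e_7$ is $\nabla^c$-parallel, and $\hol\left(\nabla^c\right)=\hol\left(\nabla^+\right)\leqslant\su\left(3\right)$.
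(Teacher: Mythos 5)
Your proposal follows essentially the same route as the paper: splitting off the $\nabla^c$-parallel direction $e_8$ via the generalized de Rham theorem, identifying $Y^7$ as a cocalibrated $\G_2$-manifold with parallel characteristic torsion in $\Lambda^3_1\op\Lambda^3_{27}$, and invoking \cite{Fri06} for the Sasakian $\eta$-Einstein identification, with the converse obtained by reconstructing $\Phi$ via (\ref{eqn:1}). Your added normalization $a_1=2$ (from $de_7=e_7\hook\T^c=a_1\cdot Z$) and the explicit check $\Ric^g=\Ric^c_{ij}+\frac{1}{4}\,\T^c_{imn}\T^c_{jmn}=\diag\left(10,\ldots,10,6,0\right)$ correctly supply the detail behind the stated Ricci tensor, which the paper leaves implicit in the word ``homothetic'' and the citation.
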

\begin{rmk}
Simply connected Sasakian manifolds $\left(M^7,g,\xi,\vphi\right)$ which admit the Ricci tensor of the last theorem can
be constructed via the Tanno deformation of a $7$-dimensional Einstein-Sasakian structure $\left(\tilde{M}^7,\tilde{
g},\tilde{\xi},\tilde{\vphi}\right)$. This deformation is
\be
\vphi:=\tilde{\vphi},\quad \xi:=a^2\cdot\tilde{\xi},
\quad g:=a^{-2}\cdot\tilde{g}+\left(a^{-4}-a^{-2}\right)\cdot\tilde{\eta}\ox\tilde{\eta}
\ee
with the deformation parameter $a^2=\frac{3}{2}$ (see \cite{FK00}). We recommend the article \cite{BGM04} for further
constructions of Sasakian structures of $\eta$-Einstein type.
\end{rmk}
\begin{exa}
The algebraic classification in \autoref{sec:5} proves that for $\hol\left(\nabla^c\right)=\so\left(3\right)$ or $\mfr{t}^2$
the corresponding $\Spin\left(7\right)$-structure $M^8$ is a homogeneous naturally reductive space. Since the curvature
tensor of the characteristic connection does not depend on $e_8$, we can conclude the same for the Sasakian manifold $Y^7$
and denote $Y^7=\G/\mrm{H}$. In \cite{Fri06} $Y^7$ was identified for characteristic holonomy $\h=\hol\left(\nabla^c\right)
=\so\left(3\right)$. Here the corresponding naturally reductive space is isometric to the Stiefel manifold $Y^7=\SO\left(5
\right)/\SO\left(3\right)$. We finally discuss the case $\h=\hol\left(\nabla^c\right)=\mfr{t}^2$. The Lie algebra $\g$ of
the $9$-dimensional automorphism group $\G$ is given by $\g=\mfr{t}^2\op\R^7$ with the bracket
\be
\left[A+X,B+Y\right]=\left(\left[A,B\right]-\mrm{R}^c\left(X,Y\right)\right)+\left(A\cdot Y-B\cdot X-\T^c\left(X,Y\right)\right).
\ee
It turns out that the corresponding Killing form is non-degenerate, and thus $\g$ is semisimple. Consequently,
$\g$ is isomorphic to $\su\left(2\right)\op c\left(\su\left(2\right)\right)$, where $c\left(\su\left(2\right)\right)$ denotes
the centralizer of $\su\left(2\right)$ inside $\spin\left(7\right)$.
\end{exa}
We proceed with torsion type $I\!I$. The torsion form does not depend on $e_7$ and $e_8$, and the tangent
bundle splits into the following $\hol\left(\nabla^c\right)$-invariant components:
\be
TM^8=E_1\op E_2.
\ee
Here $E_2$ is spanned by $\left\{e_7,e_8\right\}$. The torsion form $\T^c$ belongs to the $\Lambda^3_2\left(\R^6\right)\op
\Lambda^3_{12}\left(\R^6\right)$-component of the decomposition of $\Lambda^3\left(\R^6\right)$ under the action of $\Un
\left(3\right)$ (see \cite{AFS05}). Consequently, if $M^8$ is simply connected and complete, then it is isometric to the
Riemannian product of $\R^2$ with an almost Hermitian manifold $X^6$ of Gray-Hervella type $W_1\op W_3$, with K\"ahler form
$Z$ and characteristic holonomy contained in $\iso\left(\T^c\right)$. The latter structures have been exhaustively studied
in \cites{AFS05,Sch07}.
\begin{thm}
A complete, simply connected $\Spin\left(7\right)$-manifold with parallel characteristic torsion of type $I\!I$ in the
class $\iso\left(\T^c\right)=\so\left(3\right)$ or $\su\left(3\right)$ is isometric to the Riemannian product $X^6\x\R^2$,
where $X^6$ is an almost Hermitian manifold of Gray-Hervella type $W_1\op W_3$ with characteristic holonomy contained in
$\iso\left(\T^c\right)$.
\end{thm}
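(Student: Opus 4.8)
The plan is to feed the $\hol\left(\nabla^c\right)$-invariant splitting $TM^8 = E_1 \op E_2$ of the preceding subsection, with $E_1 = \mrm{span}\left(e_1,\ldots,e_6\right)$ and $E_2 = \mrm{span}\left(e_7,e_8\right)$, into the splitting theorem stated at the beginning of this section. The first observation is that every $3$-form occurring in the type $I\!I$ family---namely $\bar D = D_3 + D_4$, $D_1$, $D_2$ and $D_5$---carries only the indices $1,\ldots,6$, so that $\T^c$ lies entirely in $\Lambda^3\left(E_1\right)$; in particular $e_7 \hook \T^c = e_8 \hook \T^c = 0$. The second is that the generators $P_1,\ldots,P_8$ of $\su\left(3\right)$, and a fortiori those of $\so\left(3\right) < \su\left(3\right)$, involve no index $7$ or $8$. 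Since $\hol\left(\nabla^c\right)\leqslant\iso\left(\T^c\right)$ equals $\so\left(3\right)$ or $\su\left(3\right)$, the vectors $e_7, e_8$ are $\nabla^c$-parallel, and the decomposition $E_1 \op E_2$ is holonomy-invariant.

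With these two facts the hypotheses (\ref{eqn:3}) of the splitting theorem are immediate: contracting $\T^c$ with any vector of $E_2$ gives zero, so $\T^c\left(X_+,X_-,\,\cdot\,\right)=0$; for $X_+,Y_+\in E_1$ the form $\T^c\left(X_+,Y_+,\,\cdot\,\right)$ again involves only the indices $1,\ldots,6$ and hence lies in $E_1$; and $\T^c\left(X_-,Y_-,\,\cdot\,\right)=0\in E_2$ trivially. Applied to a complete, simply connected $M^8$, the theorem yields an isometric product $\left(M_+,g_+,\T_+\right)\x\left(M_-,g_-,\T_-\right)$ with $\T_+=\T^c$ and $\T_-=0$. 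The factor $M_-$ is then a complete, simply connected $2$-manifold whose characteristic connection $\nabla^-$ reduces to the Levi-Civita connection and which carries the $\nabla^-$-parallel frame $e_7,e_8$; hence it is flat, and $M_-=\R^2$.

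What remains---and where the actual content lies---is the identification of $X^6:=M_+$. The K\"ahler form $Z = e_{12}+e_{34}+e_{56}$ restricts to $E_1$ and is $\nabla^c$-parallel, so it defines a $\nabla^+$-parallel almost complex structure $J$, and thereby an almost Hermitian structure on $X^6$ whose characteristic connection is $\nabla^+$, with torsion $\T^c$ and characteristic holonomy $\hol\left(\nabla^c\right)\leqslant\iso\left(\T^c\right)$. To fix the Gray-Hervella type I would use that, under $\Un\left(3\right)$, the Lee-form component of $\Lambda^3\left(\R^6\right)$ is $\Lambda^3_6 = Z\wedge\Lambda^1$; thus the condition $\T^c\in\Lambda^3_2\op\Lambda^3_{12}$ amounts to $\T^c$ being primitive, i.e.\ orthogonal to $Z\wedge\Lambda^1$, which a direct computation confirms for each of $\bar D, D_1, D_2, D_5$, and hence for the whole type $I\!I$ family. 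By \cite{AFS05} this torsion type characterizes almost Hermitian $6$-manifolds of Gray-Hervella class $W_1\op W_3$, which gives the claim. The main obstacle is this final identification: the product splitting is forced by the support of $\T^c$, whereas pinning down the almost Hermitian class requires the explicit $\Un\left(3\right)$-decomposition of $\Lambda^3\left(\R^6\right)$ together with the classification of \cite{AFS05}.
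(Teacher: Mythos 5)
Your proposal is correct and follows essentially the same route as the paper: the $\nabla^c$-parallel fields $e_7,e_8$ give the holonomy-invariant splitting $TM^8=E_1\op E_2$ with $\T^c$ supported on $E_1$, the Cleyton--Moroianu splitting theorem yields the product $X^6\x\R^2$, and the identification of the Gray-Hervella class comes from $\T^c\in\Lambda^3_2\op\Lambda^3_{12}$ under $\Un\left(3\right)$ together with \cite{AFS05}. You merely make explicit some steps the paper leaves implicit (verification of the conditions $(\ast)$, flatness of the $2$-dimensional factor, and the primitivity computation $Z\hook\T^c=0$ locating the torsion in $\Lambda^3_2\op\Lambda^3_{12}$), all of which check out.
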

\begin{rmk}
Consider the special case $\hol\left(\nabla^c\right)=\su\left(3\right)=\iso\left(\T^c\right)$ and torsion type $I\!I$. Here
$\T^c$ is proportional to $D$ and $X^6$ is isometric to a strictly (i.e.\ non-K\"ahler) nearly K\"ahler manifold. Conversely,
any Riemannian product $X^6\x\R^2$ with $X^6$ a strictly nearly K\"ahler manifold admits a $\Spin\left(7\right)$-structure
with parallel characteristic torsion and characteristic holonomy contained in $\su\left(3\right)$.
\end{rmk}
\subsection{The cases \texorpdfstring{$\bm{\iso\left(\mathbf{T}^c\right)=\g_2}$, $\bm{\su\left(2\right)\op\su_c\left(2\right)}$,
$\bm{\R\op\su_c\left(2\right)}$}{iso(T^c)=g_2, su(2)+su_c(2), R+su_c(2)}} 
The vector field $e_8$, the spi\-nor fields $\Psi_1-\Psi_2$, $\Psi_9-\Psi_{10}$ and the globally defined $3$-form
$\left(Z\wedge e_7+D\right)$ are parallel with respect to $\nabla^c$. The tangent bundle $TM^8$ of $\left(M^8,g,\Phi\right)$
splits into two components preserved by $\nabla^c$,
\be
TM^8=E\op \R\cdot e_8.
\ee
The torsion form $\T^c$ satisfies
\be
e_8\hook \T^c=b_2\,\left(e_{12}-e_{34}\right)
\ee
and the real parameter $b_2$ vanishes if and only if $\iso\left(\T^c\right)=\g_2$ or $\su\left(2\right)\op\su_c\left(2\right)$
(see section \ref{sec:5}). Consequently, we split the discussion into $\iso\left(\T^c\right)=\g_2$ or $\su\left(2\right)\op
\su_c\left(2\right)$ and $\iso\left(\T^c\right)=\R\op\su_c\left(2\right)$.

Suppose $\iso\left(\T^c\right)=\g_2$ or $\su\left(2\right)\op\su_c\left(2\right)$. Here the torsion form does not
depend on $e_8$, and $\nabla^ge_8=0$. Therefore a complete and simply connected $M^8$ is isometric to the Riemannian
product of a $7$-manifold $Y^7$ with $\R$. The $3$-form $\T^c$ is contained in the component $\Lambda^3_1\left(
\R^7\right)\op\Lambda^3_{27}\left(\R^7\right)$ of the decomposition of $3$-forms on $\R^7$ into $\G_2$-irreducible
components (see \cite{FI02}). Consequently, the space $Y^7$ is isometric to a cocalibrated $\G_2$-manifold with fundamental
form $\left(Z\wedge e_7+D\right)$ and parallel characteristic torsion $\T^c$. The corresponding characteristic holonomy
of $Y^7$ is the subalgebra $\hol\left(\nabla^c\right)$ of $\iso\left(\T^c\right)$. Finally, we can reconstruct the
considered $\Spin\left(7\right)$-structure using equation (\ref{eqn:1}).
\begin{thm}
Let $\left(M^8,g,\Phi\right)$ be a complete, simply connected $\Spin\left(7\right)$-manifold with parallel characteristic
torsion $\T^c$ and $\iso\left(\T^c\right)=\g_2$ or $\su\left(2\right)\op\su_c\left(2\right)$. Then $M^8$ is isometric to
the Riemannian product of $\R$ with a cocalibrated $\G_2$-manifold with parallel characteristic torsion and characteristic
holonomy contained in $\iso\left(\T^c\right)$.
\end{thm}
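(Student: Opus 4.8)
The plan is to produce a $\nabla^g$-parallel direction out of the structure, split it off with the de Rham-type theorem recalled at the beginning of this section, and recognize the complementary $7$-dimensional factor as a cocalibrated $\G_2$-manifold. The key preparatory fact is that $\g_2$ is precisely the stabilizer of $e_8$ in $\spin\left(7\right)$: each generator $P_i$, $Q_j$ of theorem \ref{thm:1} is a $2$-form free of the index $8$ and hence annihilates $e_8$. As $\hol\left(\nabla^c\right)\leqslant\iso\left(\T^c\right)\leqslant\g_2$ in the cases considered, the orthogonal decomposition $TM^8=E\op\R\cdot e_8$ is holonomy invariant and $\nabla^ce_8=0$. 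From the classification in \autoref{sec:5} the relevant constraint for $\iso\left(\T^c\right)=\g_2$ or $\su\left(2\right)\op\su_c\left(2\right)$ is $b_2=0$, whence $e_8\hook\T^c=b_2\left(e_{12}-e_{34}\right)=0$ and $\T^c\in\Lambda^3\left(E\right)$. Inserting $\nabla^ce_8=0$ and $e_8\hook\T^c=0$ into $\nabla^c_XY=\nabla^g_XY+\tfrac{1}{2}\,\T^c\left(X,Y,\,\cdot\,\right)$ then gives $\nabla^g_Xe_8=0$, so that $e_8$ is $\nabla^g$-parallel.

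Next I would feed the invariant splitting $TM^8=E\op\R\cdot e_8$ into the de Rham-type splitting theorem. Since $\T^c\in\Lambda^3\left(E\right)$, the torsion conditions (\ref{eqn:3}) hold verbatim: the mixed contraction $\T^c\left(X,e_8,\,\cdot\,\right)$ vanishes, $\T^c\left(X,Y,\,\cdot\,\right)\in E$ for all $X,Y\in E$, and no torsion survives along $\R\cdot e_8$. Completeness and simple connectedness therefore yield an isometry of $M^8$ with a Riemannian product $Y^7\x\R$, carrying the decomposition $\T^c=\T^c_++0$, and the hypothesis $\nabla^c\T^c=0$ descends to $\nabla^+\T^c_+=0$ for the induced connection $\nabla^+$ on $Y^7$.

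Finally I would identify $Y^7$. Restricting the $\Spin\left(7\right)$-structure to $E$ produces a $\G_2$-structure with fundamental $3$-form $\left(Z\wedge e_7+D\right)$, recovered from $\phi$ by contraction with $e_8$; because $\nabla^c$ preserves both $\Phi$ and $e_8$, the connection $\nabla^+$ is metric, has totally skew-symmetric torsion $\T^c_+$ and preserves this $\G_2$-form. By \cite{FI02} the existence of such a connection forces $Y^7$ to be cocalibrated and identifies $\nabla^+$ with its unique characteristic connection; consequently $\hol\left(\nabla^+\right)=\hol\left(\nabla^c\right)\leqslant\iso\left(\T^c\right)$ and $Y^7$ carries parallel characteristic torsion. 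The real content of the argument is the algebraic input $b_2=0$ imported from \autoref{sec:5}, which renders $\T^c$ independent of $e_8$; everything downstream — the Levi-Civita parallelism of $e_8$, the verification of (\ref{eqn:3}), and the passage from $\Spin\left(7\right)$ to $\G_2$ — then follows without further obstacle.
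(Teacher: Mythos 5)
Your argument follows the paper's own route step for step: the holonomy principle applied to $e_8$ (using that all generators $P_i$, $Q_j$ of $\g_2$ omit the index $8$), the input $b_2=0$ from the table in \autoref{sec:5} giving $e_8\hook\T^c=0$, the deduction $\nabla^g e_8=0$ from $\nabla^c e_8=0$, the verification of the conditions (\ref{eqn:3}) and the splitting $M^8\cong Y^7\x\R$, and the identification of $\nabla^+$ with the characteristic connection of the induced $\G_2$-structure with fundamental form $Z\wedge e_7+D$. All of this is correct and matches the paper.

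There is, however, one genuine flaw, in the final step: you claim that by \cite{FI02} ``the existence of such a connection forces $Y^7$ to be cocalibrated.'' That misquotes the cited result. Friedrich--Ivanov prove that a $\G_2$-structure admits a metric connection with totally skew-symmetric torsion preserving the $\G_2$-form if and only if it belongs to the strictly larger class $\W_1\op\W_3\op\W_4$ (in particular, locally conformally parallel $\G_2$-structures, whose characteristic torsion has a non-trivial $\Lambda^3_7$-component, also admit such a connection); cocalibratedness is the stronger condition $\W_1\op\W_3$, equivalent to the vanishing of the $\G_2$ Lee form, i.e.\ to the torsion having no $\Lambda^3_7\left(\R^7\right)$-component. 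The paper itself illustrates that your implication fails: in the case $\iso\left(\T^c\right)=\un\left(2\right)$ with torsion of type $I\!I$, the split-off factor $Y^7$ carries exactly such a compatible skew-torsion connection yet is only an \emph{integrable} $\G_2$-manifold (see \cite{FI03}), not a cocalibrated one. To close the gap you must add the algebraic check the paper makes at this point: with $b_2=0$ the torsion is $\T^c=a_1\left(Z\wedge e_7+D\right)+b_1\left(\left(Z_1-6\,Z_2\right)\wedge e_7+D\right)$, and a direct computation shows this lies in $\Lambda^3_1\left(\R^7\right)\op\Lambda^3_{27}\left(\R^7\right)$ (the $a_1$-term is proportional to the fundamental form, and the $b_1$-term has vanishing $\Lambda^3_7$-part), whence the induced $\G_2$-structure is cocalibrated. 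With that one computation inserted, your proof is complete and coincides with the paper's.
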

Those $Y^7$ with non-Abelian characteristic holonomy were extensively studied in \cite{Fri06}. We provide an
example for $Y^7$ with Abelian characteristic holonomy. This restricts necessarily to $\iso\left(\T^c\right)=
\su\left(2\right)\op\su_c\left(2\right)$.
\begin{exa}\label{exa:1}
If $\hol\left(\nabla^c\right)=\mfr{0}$ the torsion parameters satisfy $b_2=0$ and $b_1=-a_1$. Moreover, the spinor
fields $\Psi_1,\ldots,\Psi_{16}$ are $\nabla^c$-parallel, the characteristic curvature tensor vanishes $\mrm{R}^c=0$
and the torsion form $\T^c$ is proportional to $Z_2\wedge e_7=e_{567}$. In particular, the latter implies that
$\T^c$ does not depend on the $\nabla^c$-parallel vector fields $e_1$, $e_2$, $e_3$, $e_4$ and $e_8$. Computing
the Lie bracket $\left[\,\cdot\,,\,\cdot\,\right]$ of the Lie algebra corresponding to $M^8$ via $\T^c\left(X,Y,Z
\right)=-g\left(\left[X,Y\right],Z\right)$ results in the conclusion that a complete and simply connected $M^8$ is
isometric to the Riemannian product $\R^5\x\SU\left(2\right)$.
\end{exa}
We proceed with $\iso\left(\T^c\right)=\R\op\su_c\left(2\right)$. With the algebraic considerations of \autoref{sec:5}
we immediately obtain the following result:
\begin{prop}
Any $\Spin\left(7\right)$-manifold with parallel characteristic torsion $\T^c$ and $\iso\left(\T^c\right)=\R\op\su_c
\left(2\right)$ is isometric to a homogeneous naturally reductive space.
\end{prop}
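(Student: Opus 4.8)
The plan is to combine the algebraic data already assembled in \autoref{sec:5} for the case $\iso\left(\T^c\right)=\R\op\su_c\left(2\right)$ with the structure-theoretic machinery for spaces with parallel torsion. First I would recall from the classification table in \autoref{ssec:5.5} that when $\iso\left(\T^c\right)=\R\op\su_c\left(2\right)$ the space $\mca{K}\left(\hol\left(\nabla^c\right)\right)$ is always trivial; indeed \autoref{prop:2} forces $\su\left(2\right)\nleqslant\hol\left(\nabla^c\right)$ here, since $\su\left(2\right)$ is not a subalgebra of $\R\op\su_c\left(2\right)$. By the cited result of Cleyton--Swann \cite{CS04}, the vanishing of $\mca{K}\left(\hol\left(\nabla^c\right)\right)$ implies that the characteristic curvature operator $\mrm{R}^c$ is $\nabla^c$-parallel. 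This parallelism of $\mrm{R}^c$, together with the standing hypothesis $\nabla^c\T^c=0$, is exactly the input one needs to invoke natural reductivity.

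Next I would pass to the universal cover and argue that the space is locally homogeneous and naturally reductive. The key mechanism is the classical Ambrose--Singer type construction: when both the torsion $\T^c$ and the curvature $\mrm{R}^c$ of a metric connection with skew torsion are $\nabla^c$-parallel, one builds a transitive Lie algebra of Killing fields on $\R^8$ by setting $\g=\hol\left(\nabla^c\right)\op\R^8$ with a bracket modeled exactly on the formula displayed in \autoref{exa:1},
\be
\left[A+X,B+Y\right]=\left(\left[A,B\right]-\mrm{R}^c\left(X,Y\right)\right)+\left(A\cdot Y-B\cdot X-\T^c\left(X,Y\right)\right),
\ee
where $A,B\in\hol\left(\nabla^c\right)$ and $X,Y\in\R^8$. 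The two parallelism conditions guarantee precisely that this bracket satisfies the Jacobi identity, so $\g$ is a genuine Lie algebra; the reductive splitting $\g=\hol\left(\nabla^c\right)\op\R^8$ and the fact that $\nabla^c$ is the canonical connection of the associated homogeneous space then yield that the resulting metric is naturally reductive.

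The main obstacle I anticipate is justifying the transition from a purely local, infinitesimal model to an actual global homogeneous naturally reductive space, since the statement does not assume completeness or simple connectivity. For a complete simply connected representative this is standard — one integrates $\g$ to a Lie group $\G$, takes the connected isotropy subgroup $\mrm{H}$ integrating $\hol\left(\nabla^c\right)$, and identifies $M^8$ with $\G/\mrm{H}$ equipped with the naturally reductive metric. Without completeness one obtains the conclusion only locally, in the sense that the structure is locally isometric to such a homogeneous space; given the phrasing of the proposition I would interpret ``isometric to a homogeneous naturally reductive space'' as this local-model statement, or equivalently restrict attention to the universal cover of a complete representative as is done for the neighbouring theorems. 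The remaining work is the verification of the Jacobi identity from $\nabla^c\mrm{R}^c=0$ and $\nabla^c\T^c=0$, which is a routine but somewhat lengthy computation of the type already carried out in \autoref{exa:1}, so I would only sketch it rather than grind through the bracket relations in full.
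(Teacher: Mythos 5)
Your argument is correct and follows essentially the same route as the paper, whose proof consists of exactly the observation you make: every holonomy algebra admissible for $\iso\left(\T^c\right)=\R\op\su_c\left(2\right)$ satisfies $\su\left(2\right)\nleqslant\hol\left(\nabla^c\right)$, so proposition \ref{prop:2} gives $\mca{K}\left(\hol\left(\nabla^c\right)\right)=0$, hence $\nabla^c\mrm{R}^c=0$ by \cite{CS04}, and parallel torsion together with parallel curvature produces the naturally reductive homogeneous model via the Ambrose--Singer bracket you display. Two harmless nits: that bracket formula actually appears in the unnumbered example of the Sasakian subsection rather than in example \ref{exa:1}, and your caveat about the missing completeness/simple-connectivity hypotheses is well taken, since the paper itself elides this point in stating the proposition.
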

\begin{exa}\label{exa:2}
Let $\hol\left(\nabla^c\right)=\mfr{0}$. Then the spinor fields $\Psi_1,\ldots,\Psi_{16}$ are $\nabla^c$-parallel,
$\mrm{R}^c=0$ and the characteristic torsion is proportional to one of the following two $3$-forms:
\be
\alpha_\pm=\left(Z_1-2\,Z_2\right)\wedge e_7 + D \pm \sqrt{3}\cdot\left(Z_3\wedge e_8\right).
\ee
Each of these $3$-forms can be reconstructed with
\be
\alpha_\pm=-g\left(\left[X,Y\right],Z\right)
\ee
using the bracket $\left[\,\cdot\,,\,\cdot\,\right]$ of the respective Lie algebra
\be
\su\left(3\right)=\mrm{span}\left(P_4,P_3,P_1,P_2,-P_5,-P_6,P_7,\pm{\sst\sqrt{1/3}}\,\left(P_7+2\,P_8\right)\right).
\ee
We conclude that the considered $\Spin\left(7\right)$-manifold $M^8$ is isometric to $\SU\left(3\right)$.
\end{exa}
\subsection{The case \texorpdfstring{$\bm{\iso\left(\mathbf{T}^c\right)=\R\op\su\left(2\right)}$}{iso(T^c)=R+su(2)}}
The characteristic holonomy $\hol\left(\nabla^c\right)$ is necessarily equal to $\R\op\su\left(2\right)$. A $\Spin\left(
7\right)$-structure $\left(M^8,g,\Phi\right)$ with non-trivial parallel characteristic torsion and $\hol\left(\nabla^c
\right)=\R\op\su\left(2\right)=\iso\left(\T^c\right)$ admits two $\nabla^c$-parallel $2$-forms $Z_1=e_{12}+e_{34}$ and
$Z_2=e_{56}$, two $\nabla^c$-parallel spinor fields $\Psi_9$ and $\Psi_{10}$, and the tangent bundle $TM^8$ splits into
the sum of two $\R\op\su\left(2\right)$-invariant subbundles,
\be
TM^8=E_1\op E_2.
\ee
Here $E_2$ is spanned by $\left\{e_7,e_8\right\}$. The torsion form $\T^c$ does not depend on $e_7$ and $e_8$, and
therefore a complete, simply connected $M^8$ is isometric to the Riemannian product of a $2$-dimensional manifold with
a $6$-dimensional manifold $X^6$. The space $X^6$ is isometric to an almost Hermitian manifold with K\"ahler form
$Z=Z_1+Z_2$ and non-trivial parallel characteristic torsion $\T^c$. This torsion form is contained in the $\Lambda^3_{12}
\left(\R^6\right)$-component in the decomposition of $\Lambda^3\left(\R^6\right)$ under the action of $\Un\left(3\right)$
(see \cite{AFS05}). Analyzing the representation of $\Hol\left(\nabla^c\right)\subset\Un\left(1\right)\x\Un\left(2\right)
\subset\Un\left(3\right)$ on $\R^6\cong E_1$, we conclude with \cites{FKMS97} that $X^6$ carries the structure of a twistor
space and is homothetic to either $\C\Pro^3$ or $\Flag\left(1,2\right)$. The representation of $\Hol\left(\nabla^c\right)$
on $\R^2\cong E_2$ defines a non-trivial rotation. The $2$-dimensional component is consequently isometric to $\mrm{S}^2$.
We can reconstruct the considered $\Spin\left(7\right)$-structure from the Hermitian structure of its components,
\be
\Phi=\frac{1}{2}\cdot \omega\wedge\omega + \mrm{Re}\left(F\right),
\ee
where
\begin{align*}
\omega&:=Z+e_7\wedge e_8, & F&:=\left(e_1+i\,e_2\right)\wedge\left(e_3+i\,e_4\right)\wedge\left(e_5+i\,e_6\right)\wedge\left(
e_7+i\,e_8\right).
\end{align*}
Finally we obtain the following result.
\begin{thm}\label{thm:3}
A complete, simply connected $\Spin\left(7\right)$-manifold with parallel characteristic torsion $\T^c$ and $\iso\left(\T^c
\right)=\R\op\su\left(2\right)$ is isometric to the Riemannian product of a $2$-sphere with either the projective space $\C
\Pro^3$ or the flag manifold $\Flag\left(1,2\right)$, both equipped with their standard nearly K\"ahler structure from the
twistor construction.
\end{thm}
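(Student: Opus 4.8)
The plan is to reduce the eight-dimensional classification to the already-understood six-dimensional almost Hermitian case by exploiting the holonomy splitting of the tangent bundle. First I would invoke the algebraic results of \autoref{sec:5}: for $\iso\left(\T^c\right)=\R\op\su\left(2\right)$ the characteristic holonomy is forced to equal $\R\op\su\left(2\right)$ itself, the torsion form is the explicit one-parameter family $\T^c=b_1\cdot\left(D_3-D_4\right)$, and the two-forms $Z_1,Z_2$ together with the spinors $\Psi_9,\Psi_{10}$ are $\nabla^c$-parallel. The crucial observation is that $\T^c$ involves none of $e_7,e_8$, so setting $E_2:=\mrm{span}\left(e_7,e_8\right)$ and $E_1$ its orthogonal complement yields exactly the decoupling condition (\ref{eqn:3}) of the de Rham splitting theorem. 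Applying that theorem, a complete simply connected $M^8$ becomes a Riemannian product of a surface with a six-manifold $X^6$ carrying the induced almost Hermitian structure with K\"ahler form $Z=Z_1+Z_2$ and parallel torsion $\T^c\in\Lambda^3_{12}\left(\R^6\right)$.

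Next I would identify each factor. For $X^6$ the point is that the torsion sits purely in the $\Lambda^3_{12}$-component, so the structure is of the appropriate Gray--Hervella type and the holonomy representation of $\Hol\left(\nabla^c\right)\subset\Un\left(1\right)\x\Un\left(2\right)\subset\Un\left(3\right)$ on $E_1\cong\R^6$ is the defining datum. Here I would cite \cite{FKMS97} to recognize this representation as that of a twistor space over a four-dimensional self-dual Einstein base, so that $X^6$ is homothetic to one of exactly two homogeneous twistor spaces, namely $\C\Pro^3$ (over $\mrm{S}^4$) or $\Flag\left(1,2\right)$ (over $\C\Pro^2$), each with its canonical nearly K\"ahler structure. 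For the two-dimensional factor $E_2$, I would examine how $\Hol\left(\nabla^c\right)$ acts on $\R^2\cong E_2$: the $\su\left(2\right)$-summand of the holonomy rotates this plane non-trivially, which forces nonzero curvature and hence constant positive Gauss curvature, so the factor is a round $\mrm{S}^2$ rather than a flat $\R^2$.

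Finally I would close the loop by reconstructing the $\Spin\left(7\right)$-structure from the product data, verifying $\Phi=\tfrac{1}{2}\,\omega\wedge\omega+\mrm{Re}\left(F\right)$ with $\omega=Z+e_7\wedge e_8$ and the decomposable $\left(4,0\right)$-form $F$ built from the complex coordinates, thereby confirming that the product indeed realizes the original isotropy algebra $\R\op\su\left(2\right)$ and that no spurious cases are lost.

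I expect the main obstacle to be the precise identification of $X^6$ in the second step: deducing from ``parallel torsion in $\Lambda^3_{12}$ plus the $\Un\left(1\right)\x\Un\left(2\right)$-holonomy constraint'' that $X^6$ must be one of exactly these two twistor spaces, and no other. This requires matching the abstract holonomy representation to the geometry of the twistor fibration and ruling out non-homogeneous or non-compact alternatives; it is here that the external classification results \cite{FKMS97} do the heavy lifting, and verifying that our $\Hol\left(\nabla^c\right)$ falls exactly within their hypotheses is the delicate point. By contrast, the splitting step and the $\mrm{S}^2$-identification are comparatively routine consequences of the explicit torsion form and curvature already computed in \autoref{sec:5}.
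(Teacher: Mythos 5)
Your proposal reproduces the paper's own proof essentially step for step: the same appeal to \autoref{sec:5} for $\hol\left(\nabla^c\right)=\R\op\su\left(2\right)$ and the explicit torsion $b_1\cdot\left(D_3-D_4\right)$, the same splitting $TM^8=E_1\op E_2$ with $E_2=\mrm{span}\left(e_7,e_8\right)$ via the Cleyton--Moroianu theorem, the same identification of $X^6$ through $\T^c\in\Lambda^3_{12}\left(\R^6\right)$ and the $\Hol\left(\nabla^c\right)\subset\Un\left(1\right)\x\Un\left(2\right)\subset\Un\left(3\right)$ representation using \cite{FKMS97}, the same rotation argument for the surface factor, and the same reconstruction formula $\Phi=\frac{1}{2}\,\omega\wedge\omega+\mrm{Re}\left(F\right)$. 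One detail is misattributed, though the conclusion survives: in the basis of theorem \ref{thm:1} the summand $\su\left(2\right)=\mrm{span}\left(P_5,P_6,P_7\right)$ involves only the indices $1,\ldots,4$ and hence acts \emph{trivially} on $E_2$; the non-trivial rotation of the plane $\mrm{span}\left(e_7,e_8\right)$ is generated by the central $\R$-direction, since $P_7+2\,P_8-4\,S_7=e_{12}+e_{34}+2\,e_{56}-4\,e_{78}$ has a non-vanishing $e_{78}$-component. Because the full algebra $\R\op\su\left(2\right)$ is forced to be the characteristic holonomy, the center does lie in $\hol\left(\nabla^c\right)$ and the argument goes through. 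Finally, note that your inference ``non-trivial rotation $\Ra$ nonzero curvature $\Ra$ constant positive Gauss curvature'' contains the same compression as the paper's one-line assertion that the $2$-dimensional component is $\mrm{S}^2$ (non-flatness alone does not give constancy of the curvature), so this is not a gap relative to the source, but it is the place where a fully self-contained write-up would need an extra word, e.g.\ via the homogeneity of the resulting structure.
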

\subsection{The case \texorpdfstring{$\bm{\iso\left(\mathbf{T}^c\right)=\un\left(2\right)}$}{iso(T^c)=u(2)}}
The differential forms $Z_1$, $Z_2$, $D$, the vector fields $e_7$, $e_8$ and the spinor fields $\Psi_1$, $\Psi_2$,
$\Psi_9$, $\Psi_{10}$ are parallel with respect to $\nabla^c$ and the tangent bundle of $\left(M^8,g,\Phi\right)$
splits into the following $\un\left(2\right)$-invariant components:
\be
TM^8=E_1 \op E_2\op\R\cdot e_7\op\R\cdot e_8.
\ee
The subbundle $E_2$ is spanned by $e_5$ and $e_6$.

There are two types of characteristic torsion. We start to discuss the case of torsion type $I$. Setting
\begin{align*}
TM_+&=E_1\op\left(\left(a_2+b_1\right)\cdot e_7+a_1\cdot e_8\right), & TM_-&=E_2\op\left(\left(a_2-\frac{2}{5}\,b_1\right)
\cdot e_7+a_1\cdot e_8\right)
\end{align*}
and $\T=\T_++\T_-=\T^c$ satisfies system (\ref{eqn:3}) and
\begin{align*}
\T_+&=Z_1\wedge\left(\left(a_2+b_1\right)\cdot e_7+a_1\cdot e_8\right), & \T_-&=5\cdot Z_2\wedge\left(\left(a_2-\frac{2}{5}\,
b_1\right)\cdot e_7+a_1\cdot e_8\right).
\end{align*}
The equation $e_7\hook \T^c=0$ or $e_8\hook \T^c=0$ holds, if $a_2=b_1=0$ or $a_1=0$ respectively. Consequently, a
simply connected and complete $M^8$ is isometric to the Riemannian product of a $5$-manifold with a $3$-manifold each
carrying a Sasakian structure. The respective fundamental forms are $Z_1$ and $Z_2$.
\begin{thm}
Let $\left(M^8,g,\Phi\right)$ be a complete, simply connected $\Spin\left(7\right)$-manifold with parallel characteristic
torsion $\T^c$ and $\iso\left(\T^c\right)=\un\left(2\right)$. Suppose that the torsion form is of type $I$. Then $M^8$ is
isometric to the Riemannian product of a Sasakian $3$-manifold with a $5$-dimensional Sasakian structure.
\end{thm}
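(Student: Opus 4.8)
The plan is to exhibit $M^8$ as a Riemannian product by feeding a suitably adapted, holonomy-invariant orthogonal decomposition of $TM^8$ into the de Rham-type splitting theorem above. The whole argument is driven by the $\nabla^c$-parallel data forced by $\iso\left(\T^c\right)=\un\left(2\right)$: since $\un\left(2\right)$ separately preserves $E_1=\mrm{span}\left(e_1,\ldots,e_4\right)$ and $E_2=\R e_5\op\R e_6$ and fixes $e_7,e_8$, the two Kähler forms $Z_1,Z_2$ and the vector fields $e_7,e_8$ are all $\nabla^c$-parallel. First I would regroup the type $I$ torsion by these Kähler forms, writing $\T^c=Z_1\wedge v_++5\,Z_2\wedge v_-$ with $v_+=\left(a_2+b_1\right)e_7+a_1\,e_8$ and $v_-=\left(a_2-\frac{2}{5}\,b_1\right)e_7+a_1\,e_8$, so that the $Z_1$- and $Z_2$-parts are isolated.

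Next I would attach to $E_1$ and $E_2$ the correct lines of the $\nabla^c$-parallel plane $\R e_7\op\R e_8$, setting $TM_+=E_1\op\R v_+$ (dimension $5$) and $TM_-=E_2\op\R v_-$ (dimension $3$). This yields $\T^c=\T_++\T_-$ with $\T_+=Z_1\wedge v_+\in\Lambda^3\left(TM_+\right)$ and $\T_-=5\,Z_2\wedge v_-\in\Lambda^3\left(TM_-\right)$; since $a_1\neq0$ the vectors $v_+,v_-$ are independent and span the plane $\R e_7\op\R e_8$. Both $TM_\pm$ are holonomy-invariant, being spanned by $\nabla^c$-parallel forms and vectors, and I would then verify the three torsion conditions of (\ref{eqn:3}). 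The splitting theorem then delivers an isometry $M^8\cong\left(M_+^5,g_+,\T_+\right)\x\left(M_-^3,g_-,\T_-\right)$ with $\nabla^+\T_+=0$ and $\nabla^-\T_-=0$, the completeness and simple connectedness of $M^8$ passing to the two factors.

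The main obstacle is precisely the verification of (\ref{eqn:3}). The second and third conditions are immediate from $\T_\pm\in\Lambda^3\left(TM_\pm\right)$, but the vanishing of the mixed term $\T^c\left(X_+,X_-,\,\cdot\,\right)$ is delicate: contracting $e_1\in E_1$ against $v_-$ produces a term proportional to $\langle v_+,v_-\rangle\,e_2$, so the first condition of (\ref{eqn:3}) holds exactly when the two chosen contact directions are orthogonal, i.e. $\langle v_+,v_-\rangle=0$. This is the crux of the argument, and it is here that the real work lies: I would show that admissibility — $\nabla^c\T^c=0$ together with $\iso\left(\T^c\right)=\un\left(2\right)$ — pins the parameters $\left(a_1,a_2,b_1\right)$ so that the lines cut out of the parallel plane by $v_+$ and $v_-$ are mutually orthogonal, making $TM_+$ and $TM_-$ genuine orthogonal complements. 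The degenerate slices $a_1=0$ and $a_2=b_1=0$, where instead $e_8\hook\T^c=0$ or $e_7\hook\T^c=0$ and one factor merely carries a $\nabla^g$-parallel vector field, I would dispatch separately as easier sub-cases.

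It remains to recognize the two factors. On each the characteristic torsion has the contact shape $\eta\wedge\Phi$: explicitly $\T_+$ is proportional to $v_+^\flat\wedge Z_1$ on $M_+^5$ and $\T_-$ to $v_-^\flat\wedge Z_2$ on $M_-^3$, with $Z_1,Z_2$ the transverse Kähler forms and $v_\pm^\flat$ the contact forms; moreover the vanishing of $\Ric^c$ along $e_7,e_8$ makes each Reeb direction $\nabla^c$-Ricci-flat, as it must be for a Sasakian characteristic connection. Invoking the characterization of Sasakian manifolds through their characteristic connection (cf.\ \cite{Fri06}), I would conclude that $M_+^5$ is a $5$-dimensional Sasakian manifold with fundamental form $Z_1$ and $M_-^3$ a Sasakian $3$-manifold with fundamental form $Z_2$, which is the assertion.
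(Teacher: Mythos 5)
Your construction coincides step for step with the paper's own proof: the same regrouping $\T^c=Z_1\wedge v_+ + 5\,Z_2\wedge v_-$, the same subbundles $TM_+=E_1\op\R\,v_+$ and $TM_-=E_2\op\R\,v_-$, the same appeal to the splitting theorem of Cleyton--Moroianu, and the same Sasakian identification of the factors via $Z_1$ and $Z_2$. The difference is that where the paper flatly asserts that $\T=\T_++\T_-$ satisfies system (\ref{eqn:3}), you correctly compute that the mixed condition fails unless $\lan v_+,v_-\ran=0$ (indeed $\T^c\left(e_1,v_-,e_2\right)=-\lan v_+,v_-\ran$, and the same inner product obstructs $TM_+\perp TM_-$ and the $\nabla^g$-parallelism of $TM_\pm$). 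But your resolution is a promissory note, and this is the genuine gap: you claim that admissibility pins the parameters so that $v_+\perp v_-$, yet this cannot be read off the paper's classification. One checks that $\lan v_+,v_-\ran=a_1^2+\left(a_2+b_1\right)\left(a_2-\frac{2}{5}\,b_1\right)=\frac{1}{10}\,\kappa_{I}$, where $\kappa_I$ is precisely the Ricci eigenvalue on $E_2$ from \autoref{sec:5}; and the constraint tables there leave $\kappa_I$ unconstrained for $\hol\left(\nabla^c\right)=\un\left(2\right)$, $\su\left(2\right)$, $\mfr{t}^2$ and $\mfr{t}^1\left[k=0\right]$ (only $\mfr{t}^1\left[l=0\right]$ forces $r_2=-\frac{1}{4}\,\kappa=0$). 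So to complete your plan you would have to prove that the parameter values with $\kappa_I\neq0$ admit no geometric realization --- e.g.\ by pushing the Bianchi/realizability analysis of step $(5)$ into the cases with $\mca{K}\left(\h\right)\neq0$ --- and no such argument appears in your proposal (nor, it should be said, in the paper, which applies the splitting theorem to a decomposition that for $\kappa_I\neq0$ is not orthogonal and violates the first equation of (\ref{eqn:3})).

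A second, smaller defect: the slice $a_1=0$ is not an ``easier sub-case'' to be dispatched in passing. There $v_+$ and $v_-$ are both proportional to $e_7$, so the sum $TM_++TM_-$ is not even direct; after splitting off the $\nabla^g$-parallel direction $e_8$ one is left with the torsion $\left(\left(a_2+b_1\right)Z_1+\left(5\,a_2-2\,b_1\right)Z_2\right)\wedge e_7$ on a $7$-manifold, in which $Z_1$ and $Z_2$ share the single Reeb direction $e_7$. Unless one of the two coefficients vanishes (which either changes $\iso\left(\T^c\right)$ or is among the excluded configurations), no orthogonal $5+3$ splitting compatible with this torsion exists, so the Sasakian-product conclusion does not follow from $e_8\hook\T^c=0$ alone; note $\left(a_2+b_1\right)\left(a_2-\frac{2}{5}\,b_1\right)=0$ is again the condition $\kappa_I=0$. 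In short: your outline is the paper's proof, and your identification of the orthogonality $\lan v_+,v_-\ran=0$ as the crux is exactly right and more careful than the source, but as a proof it is incomplete at precisely that decisive step.
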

We proceed with the discussion of torsion type $I\!I$. Solving the equation $X\hook \T^c=0$ leads to $X=a_1\cdot
e_7-\left(a_2+b_1\right)\cdot e_8\neq0$ (see \autoref{sec:5}). Consequently, a complete, simply connected $M^8$ is
isometric to the Riemannian product of a $7$-dimensional integrable $\G_2$-manifold $Y^7$ with $\R$ (see \cite{FI03}).
\begin{thm}
Let $\left(M^8,g,\Phi\right)$ be a complete, simply connected $\Spin\left(7\right)$-manifold with parallel characteristic
torsion $\T^c$ and $\iso\left(\T^c\right)=\un\left(2\right)$. Suppose that the torsion form is of type $I\!I$. Then $M^8$
is isometric to the Riemannian product of $\R$ with an integrable $\G_2$-manifold with parallel characteristic torsion and
characteristic holonomy contained in $\un\left(2\right)$.
\end{thm}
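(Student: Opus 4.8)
The plan is to exhibit a $\nabla^c$-parallel vector field whose orthogonal complement is $\Hol(\nabla^c)$-invariant, apply the splitting theorem of this section, and then recognize the resulting seven-dimensional factor as an integrable $\G_2$-manifold. First I would locate the distinguished direction. Since $Z_1,Z_2,D$ and the vectors $e_7,e_8$ are $\nabla^c$-parallel in the $\un\left(2\right)$-case (as established above), it suffices to find a combination $X=\alpha\,e_7+\beta\,e_8$ with $X\hook\T^c=0$. Contracting the type $I\!I$ torsion gives $e_7\hook\T^c=(a_2+b_1)\,(Z_1-2\,Z_2)$ and $e_8\hook\T^c=a_1\,(Z_1-2\,Z_2)$, so that $X:=a_1\,e_7-\left(a_2+b_1\right)\,e_8$ satisfies $X\hook\T^c=0$. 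This $X$ is nonzero: the vanishing $a_1=0$ and $a_2+b_1=0$ is exactly the configuration $a_1=0$, $b_1=-a_2$, which by the classification of \autoref{sec:5} forces $\iso\left(\T^c\right)=\su\left(3\right)$ and is therefore excluded. Being a fixed real combination of the $\nabla^c$-parallel fields $e_7,e_8$, the field $X$ is itself $\nabla^c$-parallel, so both $\R\cdot X$ and $X^\perp$ are $\Hol\left(\nabla^c\right)$-invariant.

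Next I would apply the de Rham-type splitting theorem (see \cite{CM08}) quoted at the start of this section, with $TM_-=\R\cdot X$ and $TM_+=X^\perp$. Conditions (\ref{eqn:3}) follow at once from $X\hook\T^c=0$: the mixed term $\T^c\left(X_+,X_-,\,\cdot\,\right)$ vanishes since $X_-$ is proportional to $X$; the term $\T^c\left(X_+,Y_+,\,\cdot\,\right)$ stays in $TM_+$ because $\T^c\left(X_+,Y_+,X\right)=\left(X\hook\T^c\right)\left(X_+,Y_+\right)=0$; and $\T^c\left(X_-,Y_-,\,\cdot\,\right)=0$ trivially as $\dim TM_-=1$. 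Since $M^8$ is complete and simply connected, the theorem yields an isometric splitting $M^8\cong Y^7\x\R$, with the $\R$-factor directed along $X$, the entire torsion carried by $Y^7$ (here $X\hook\T^c=0$ forces $\T_-=0$, $\T_+=\T^c$), and $\nabla^c=\nabla^+\x\nabla^-$ with $\nabla^-$ flat. In particular $\hol\left(\nabla^c\right)$ equals the holonomy of the restricted connection $\nabla^+$ on $Y^7$.

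Finally I would identify the geometry of $Y^7$. As $\nabla^c\Phi=0$ and $\nabla^cX=0$, the contraction $\left(X/\|X\|\right)\hook\Phi$ is a $\nabla^c$-parallel $3$-form on $Y^7=X^\perp$, which is precisely the fundamental form of the $\G_2$-structure arising from the reduction $\G_2=\Spin\left(7\right)\cap\SO\left(7\right)$ determined by fixing the direction $X$. The connection $\nabla^+$ is metric, has totally skew-symmetric torsion $\T^c$, and preserves this $\G_2$-structure; by \cite{FI03} the existence of such a characteristic connection is exactly the integrability of the $\G_2$-structure, its characteristic torsion is the parallel form $\T^c$, and its characteristic holonomy is $\hol\left(\nabla^+\right)=\hol\left(\nabla^c\right)\leqslant\iso\left(\T^c\right)=\un\left(2\right)$. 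Reassembling through (\ref{eqn:1}) recovers the original $\Spin\left(7\right)$-structure.

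I expect the main obstacle to be this last step: confirming that $\left(X/\|X\|\right)\hook\Phi$ is genuinely a $\G_2$ fundamental form whose induced characteristic torsion is exactly $\T^c$, and matching the constructed connection against the precise integrability criterion and holonomy bookkeeping of \cite{FI03}. The contraction computation and the verification of (\ref{eqn:3}) are routine once the correct parallel direction $X$ has been isolated.
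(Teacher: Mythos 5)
Your proposal is correct and takes essentially the same route as the paper: the paper likewise solves $X\hook\T^c=0$ to obtain the $\nabla^c$-parallel direction $X=a_1\,e_7-\left(a_2+b_1\right)\,e_8\neq0$ (nonzero precisely because $a_1=0$, $b_1=-a_2$ is the excluded $\su\left(3\right)$-isotropy configuration) and then splits off $\R$, identifying the $7$-dimensional factor as an integrable $\G_2$-manifold via \cite{FI03}. Your write-up merely makes explicit what the paper leaves implicit, namely the contraction computation, the verification of conditions (\ref{eqn:3}) for the splitting theorem, and the construction of the induced $\G_2$ fundamental form with its characteristic connection.
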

\begin{thank}
We wish to thank Nils Sch\"omann and Richard Cleyton for discussions and the SFB 647:~\emph{Space--Time--Matter} for financial
support.
\end{thank}
%
%
%
%
\begin{bibdiv}
\begin{biblist}
\bib{AF04}{article}{
  author={Agricola, I.},
  author={Friedrich, T.},
  title={The Casimir operator of a metric
         connection with totally skew-symmetric torsion},
  journal={Journ. Geom. Phys.},
  volume={50},
  date={2004},
  pages={188--204} }
\bib{AFNP05}{article}{
  author={Agricola, I.},
  author={Friedrich, T.},
  author={Nagy, P.-A.},
  author={Puhle, C.},
  title={On the Ricci tensor in the common sector of type II string theory},
  journal={Class. Quant. Grav.},
  volume={22},
  date={2005},
  pages={2569--2577},
}
\bib{AFS05}{article}{
  author={Alexandrov, B.},
  author={Friedrich, T.},
  author={Schoemann, N.},
  title={Almost Hermitian {$6$}-Manifolds Revisited},
  journal={Journ. Geom. Phys.},
  volume={53},
  date={2005},
  pages={1--30},
}
\bib{BGM04}{article}{
  author={Boyer, C. P.},
  author={Galicki, K.},
  author={Matzeu, P.},
  title={On eta-Einstein sasakian geometry},
  journal={Comm. Math. Phys.},
  volume={262},
  date={2006},
  pages={177--208},
}
\bib{Cab95}{article}{
  author={Cabrera, F. M.},
  title={On Riemannian manifolds with {$\Spin\left(7\right)$}-structure},
  journal={Publ. Math.},
  volume={46},
  date={1995},
  pages={271--283},
}
\bib{CM08}{article}{
  author={Cleyton, R.},
  author={Moroianu, A.},
  title={Connections with parallel torsion in Riemannian geometry},
  pages={to appear, can be obtained by the authors},
}
\bib{CS04}{article}{
  author={Cleyton, R.},
  author={Swann, A.},
  title={Einstein Metrics via Intrinsic or Parallel Torsion},
  journal={Math. Z.},
  volume={247},
  date={2004},
  pages={513--528},
}
\bib{Dyn57a}{article}{
  author={Dynkin, E. B.},
  title={Semisimple subalgebras of semisimple Lie algebras},
  journal={Am. Math. Soc., Transl., II. Ser.},
  volume={6},
  date={1957},
  pages={111--243},
}
\bib{Dyn57b}{article}{
  author={Dynkin, E. B.},
  title={Maximal subgroups of the classical groups},
  journal={Am. Math. Soc., Transl., II. Ser.},
  volume={6},
  date={1957},
  pages={245--378},
}
\bib{Fer86}{article}{
  author={Fern\'andez, M.},
  title={A classification of Riemannian manifolds with structure group {$\Spin\left(7\right)$}},
  journal={Ann. Mat. Pura Appl., IV. Ser.},
  volume={143},
  date={1986},
  pages={101--122},
}
\bib{Fri06}{article}{
  author={Friedrich, T.},
  title={{$\G_2$}-manifolds with parallel characteristic torsion},
  journal={Diff. Geom. Appl.},
  volume={25},
  date={2007},
  pages={632--648},
}
\bib{FI02}{article}{
  author={Friedrich, T.},
  author={Ivanov, S.},
  title={Parallel spinors and connections with skew-symmetric torsion in string theory},
  journal={Asian Journ. Math},
  volume={6},
  date={2002},
  pages={303--336},
}
\bib{FI03}{article}{
  author={Friedrich, T.},
  author={Ivanov, S.},
  title={Killing spinor equations in dimension $7$ and geometry of integrable $\G_2$-manifolds},
  journal={Journ. Geom. Phys.},
  volume={48},
  date={2003},
  pages={1--11},
}
\bib{FKMS97}{article}{
  author={Friedrich, T.},
  author={Kath, I.},
  author={Moroianu, A.},
  author={Semmelmann, U.},
  title={On nearly parallel {$\G_2$}-structures},
  journal={Journ. Geom. Phys.},
  volume={23},
  date={1997},
  pages={256--286},
}
\bib{FK00}{article}{
  author={Friedrich, T.},
  author={Kim, E. C.},
  title={The Einstein-Dirac equation on Riemannian
         spin manifolds},
  journal={Journ. Geom. Phys.},
  volume={33},
  date={2000},
  pages={128--172},
}
\bib{HL82}{article}{
  author={Harvey, R.},
  author={Lawson, H. B.},
  title={Calibrated geometries},
  journal={Acta Math.},
  volume={148},
  date={1982},
  pages={47--157},
}
\bib{Iva04}{article}{
  author={Ivanov, S.},
  title={Connections with torsion, parallel spinors and geometry of {$\Spin\left(7\right)$}-manifolds},
  journal={Math. Res. Lett.},
  volume={11},
  date={2004},
  pages={171--186},
}
\bib{Puh07}{article}{
  author={Puhle, C.},
  title={The Killing spinor equation with higher order potentials},
  eprint={http://lanl.arxiv.org/abs/0707.2217},
  date={2007},
  pages={to appear in Journ. Geom. Phys.},
}
\bib{Str86}{article}{
  author={Strominger, A.},
  title={Superstrings with torsion},
  journal={Nucl. Phys. B},
  volume={274},
  date={1986},
  pages={253--284},
}
\bib{McKW89}{article}{
  author={McKenzie Wang, Y.},
  title={Parallel spinors and parallel forms},
  journal={Ann. Global Anal. Geom.},
  volume={7},
  date={1989},
  pages={59--68},
}
\bib{Sch07}{article}{
  author={Schoemann, N.},
  title={Almost hermitian structures with parallel torsion},
  journal={J. Geom. Phys.},
  volume={57},
  date={2007},
  pages={2187--2212}, 
}
\end{biblist}
\end{bibdiv}
\end{document}